\crefname{subsection}{subsection}{subsections}
\crefname{subsubsection}{subsubsection}{subsubsections}
\crefname{subsubsubsection}{subsubsubsection}{subsubsubsections}
\pgfplotsset{compat=1.16}
\tikzset{%
  node distance=5cm,
  every edge quotes/.append style={midway, below},
}
\tikzset{
decision/.style={rectangle, minimum height=3pt, minimum width=30pt, draw=black, thick, inner sep=3pt},
chance/.style={circle, minimum width=30pt, draw=black, thick, inner sep=0pt}
 }
\renewcommand{\graph}{\mathcal{G}}
\newcommand{\vertexindex}{v}
\newcommand{\vertexset}{\mathbb{V}}
\newcommand{\arcindex}{e}
\newcommand{\arcset}{\mathbb{E}}
\newcommand{\timeindex}{t}
\newcommand{\cumulatedtimeindex}{s}
\newcommand{\timeset}{\mathbb{T}}
\newcommand{\oilsuper}{^{\textsc{o}}}
\newcommand{\gassuper}{^{\textsc{g}}}
\newcommand{\watersuper}{^{\textsc{w}}}
\newcommand{\disgassuper}{^{\textsc{dg}}}
\newcommand{\poresuper}{^{\textsc{p}}}
\newcommand{\oilunder}{_{\textsc{o}}}
\newcommand{\gasunder}{_{\textsc{g}}}
\newcommand{\waterunder}{_{\textsc{w}}}
\newcommand{\totalflowvar}{F}
\newcommand{\oilproduced}{\totalflowvar\oilsuper}
\newcommand{\gasproduced}{\totalflowvar\gassuper}
\newcommand{\waterproduced}{\totalflowvar\watersuper}
\newcommand{\waterinjected}{\totalflowvar^{wi}}
\newcommand{\InflowPerformanceRelationship}{\textsc{Ipr}}
\newcommand{\reservoirvolume}{V}
\newcommand{\totalreservoiroil}{\reservoirvolume\oilsuper}
\newcommand{\totalreservoirgas}{\reservoirvolume\gassuper}
\newcommand{\totalreservoirwater}{\reservoirvolume\watersuper}
\newcommand{\dissolvedgas}{\reservoirvolume\disgassuper}
\newcommand{\porevolume}{\reservoirvolume\poresuper}
\newcommand{\saturation}{S}
\newcommand{\oilsaturation}{\saturation\oilsuper}
\newcommand{\gassaturation}{\saturation\gassuper}
\newcommand{\watersaturation}{\saturation\watersuper}
\newcommand{\presvar}{P}
\newcommand{\respressure}{\presvar^{\textsc{r}}} 
\newcommand{\oilformationfactor}{B\oilunder}
\newcommand{\gasformationfactor}{B\gasunder}
\newcommand{\solutiongasoilratio}{R_s}
\newcommand{\waterformationfactor}{B\waterunder}
\newcommand{\porecompressibility}{c_f}
\newcommand{\watercut}{w^{\textsc{wc}}}
\newcommand{\watercutfunct}{\textsc{Wct}}
\newcommand{\states}{x}
\newcommand{\controls}{u}
\newcommand{\admcontrolset}{\mathcal{U}^{ad}}
\newcommand{\controlseries}{\tilde{\mathbf{U}}}
\renewcommand{\dynamics}{f}
\newcommand{\pipeboolvar}{o}
\newcommand{\lambertfunct}{\mathcal{W}}
\newcommand{\costfunct}{\mathcal{L}}
\newcommand{\finalcost}{\mathcal{K}}
\newcommand{\bellval}{\mathcal{J}}
\newcommand{\policyinst}{\mu}
\newcommand{\declinecurve}{h}
\newcommand{\GeneralProductionFunction}{\Phi}
\newcolumntype{d}[1]{D{.}{.}{#1}}
\newcommand{\toprule}{\hline\hline}
\newcommand{\midrule}{\hline}
\newcommand{\bottomrule}{\hline\hline}
\date{\today}
\def\imageonetanks{1_tank_}
\def\imagestwotanks{2_tanks_}
\def\imageswaterinj{deterministic_water_inj_}
\renewcommand{\bleue}[1]{#1}
\title{Multistage Optimization of a Petroleum Production System with Material Balance
  Model}
\author{
Cyrille Vessaire\footnote{CERMICS, Ecole des Ponts, Marne-la-Vall\'ee, France},
\and Jean-Philippe Chancelier\footnotemark[1],
\and Michel {De Lara}\footnotemark[1],
\and Pierre Carpentier\footnote{UMA, ENSTA Paris, Institut Polytechnique de Paris,
  Palaiseau, France},
\and Alejandro {Rodr\'iguez-Mart\'inez}\footnote{IAM, TotalEnergies SE,
  Pau, France},
\and Anna Robert\footnote{OneTech/R\&D, TotalEnergies SE, Saclay, France}
}
\begin{document}

\maketitle

\begin{abstract}
    In this paper, we propose a mathematical formulation for the optimal management
    over time of an oil production network as a multistage optimization problem. The
    proposed model differs from common practice where the reservoir of the oil production
    network is approximated by decline curves or by black-box simulators. We model the
    reservoir as a controlled (non-linear) dynamical system by using material balance
    equations, under the standard assumptions that the fluids follow a black-oil model and
    that the reservoir has a tank-like behavior. The state of the dynamical system has
    five dimensions: the total volume of respectively oil, gas, and water in the
    reservoir; the total pore volume; and the reservoir pressure. We use a dynamic
    programming algorithm to numerically solve the multistage optimization problem on two
    specific instances of the general optimization problem where the state dimension can
    be reduced from dimension five to dimension one or two. More precisely, the first
    numerical application consists in optimizing the production of a dry gas reservoir
    which is subdivided in two tanks and which leads to a two-dimensional state (one
    dimension per tank), whereas the second numerical application tackles an oil reservoir
    with water injection which leads to a two-dimensional state. The two applications
    illustrate that our approach handles interconnected tanks (in the dry gas case) and
    that our approach allows optimization beyond first recovery of oil (in the oil with
    water injection case). We also provide numerical and theoretical comparisons with
    decline curves in the dry gas application.
\end{abstract}

\section{Introduction}
\label{sect:Intro}

Oil and gas projects usually span over several decades and involve complex planning and
decision-making. Therefore, multistage optimization is a relevant tool to address the
long-term performance of such projects. This is the focus of this paper.

The lifetime of a field usually consists of five phases: exploration, where reservoirs
containing hydrocarbon are found; appraisal, to give a value to a field; development,
where infrastructures are planned and installed; production, where hydrocarbon is finally
produced; abandonment, where the field stops producing and infrastructures are
decommissioned and removed.
In this paper, we focus on the production phase. We consider that the infrastructure has
already been installed in the development phase, and we thus focus on finding a production
schedule that maximizes the profit over the full production phase.

Now, we position our contribution with respect to the currently available literature.
According to the survey~\citep{khor_oil_2017_review}, there is extensive research on how to
optimize the production phase, with multiple approaches. The authors present three main
methods for the optimization of petroleum production systems: sensitivity analysis by
employing simulation tools, heuristic rules and mathematical optimization, the approach of
this paper. Most of the literature resorts to the first two approaches.

Regarding mathematical optimization, most works on the topic have considered black-box
simulators to describe the reservoir
dynamics: \citet{hepguler_integration_softwares} consider integrating both a network model
and a proprietary reservoir model (a commercial simulation software for reservoir
modeling); \cite{gerogiorgis_dynamic_2006} combine a proprietary reservoir simulator with
a general optimization formulation. In~\cite{sarma_efficient_2006} a closed-loop
multistage optimal control approach with a simulator that can be updated with new data
from sensors is considered. It is also a standard practice to add some optimization layer
over a commercial reservoir simulator to locally improve a production planning, such as
modifying the pressure on different points of the petroleum production system to locally
improve an operational solution (see ECLIPSE by Schlumberger, or GAP and MBAL by Petroleum
Experts).
  In theory, such approach could be amenable to dynamic programming.
  However, this is not done in practice due to the the computation time of a single
  simulation run.

A limited fraction of the literature addresses the problem as a multistage optimization
problem, such as in~\citet{Grossmann98,GUPTA2012,Marmier}. In those papers, the
formulation relies on dynamical models based on decline curves (or type curves). In short,
decline curves are functions that take as input the cumulative production and return the
maximal well rate. In the context of mathematical optimization, decline curves were first
assumed to be linear, such as in~\cite{bohannon_linear_1970}, before being assumed to be
piecewise linear in~\cite{frair_economic_1975} or polynomial in~\cite{Marmier}, 
  or
  even being assumed to be given by a set of logical relationships for shale gas
  in~\cite{hong_optimal_2020},
  when algorithms could treat those refinements. The decline
curves are generally constructed by using a foresight of the optimal solution that is
looked after, as they are usually generated by assuming a production schedule.
In~\cite{SATTER2016}, the authors write that, usually, decline curves analysis is
performed under one key assumption: the wells produce at ``constant bottom-hole
pressure''. They also state that ``in reality, such a condition may not be observed''.
Note that decline curves can, in some cases, provide an accurate representation of the
reservoir if the wells that constitute the oil field are independent of each other, and
when we are only considering first recovery of oil and gas (i.e. when we are only
producing fluids in the reservoir and without any injection of gas and water in the
reservoir).
  Despite those shortcomings, mathematical formulations using decline
  curves are commonly used in oilfield development studies. For example, two
  case studies, one in Brazil~\citep{silva_oilfield_2021} and one in New
  Mexico~\citep{davis_optimal_2021} use decline curves to solve a multistage
  optimization problem.

  Part of the literature also tries to develop a middle ground between using a
  black-box reservoir simulator and using decline curves. For example, some papers use
  parametrized surrogate models (also called proxy models). Parameters of the surrogate
  model are to be adjusted to fit simulators output or real data
  (see~\citep{caballero_algorithm_2008}). Numerous applications following the methodology
  developed in~\cite{caballero_algorithm_2008} have been done, each one being
  characterized by a specific surrogate model: in~\cite{lei_formulations_2022}, 
  a proxy model (presented in~\cite{lei_compact_2021}) that takes into account
  the decommission timing and costs in the development planning is used; whereas
  in~\cite{camponogara_integrated_2017}, the authors use MILP as a proxy model and apply
  it to a case in the Santos Basin; finally, in~\cite{moolya_optimal_2022}, the authors
  also use a MILP surrogate model combined with aggregation and disaggregation methods
  in well placement problems. In~\cite{epelle_computational_2020}, the authors compare the
  performances between MILP and MINLP formulations of the surrogate model.

In this paper, we represent the reservoir as a controlled dynamical system based on
black-oil model and conservation laws (mass balance equations) for a tank-like reservoir
instead of using decline curves or surrogate models based on a reservoir simulator. Mass
balance equations belong nowadays to the folklore of petroleum engineering and have been
described many times in the reservoir modeling literature (see~\cite{Dake}). We formulate
the management problem as a multistage optimization problem, and we use the dynamic
programming algorithm to solve it (see~\cite{bertsekas_dynamic}). To the best of our
knowledge, this approach is new in the oil and gas literature. This formulation is well
adapted to first and secondary recovery of oil and gas cases. Moreover, multistage
optimization and dynamic programming are well adapted to tackle more complex formulations
with uncertain parameters and partial observations.

\section{Formulation of the management of a petroleum production system as a multistage
  optimization problem}
We consider a production system composed of a reservoir and production assets (pipes,
wells, chokes). We represent the topology of the production assets as a simple graph
$\graph = (\vertexset, \arcset)$, where $\vertexset$ is the set of vertices and
$\arcset \subset \vertexset^2$ is the set of edges. Controls are variables indexed by
either vertices or edges. We place the different production assets on the graph, with the
pipes as the edges of the graph, and the rest of the assets, such as the well-heads,
positioned on the vertices of the graph. This is illustrated in
Figure~\ref{fig:drawing_network}. The wells' perforations are represented as vertices
($w_i$ in Figure~\ref{fig:drawing_network}) where the fluids produced enter the graph. On
the other vertices, we have assets such as the well-head chokes ($wh_i$ in
Figure~\ref{fig:drawing_network}), or joints between different pipes (noted $i_1$). We can
also have valves to open or close pipes. Finally, we have an export point (on the vertex
$e$).

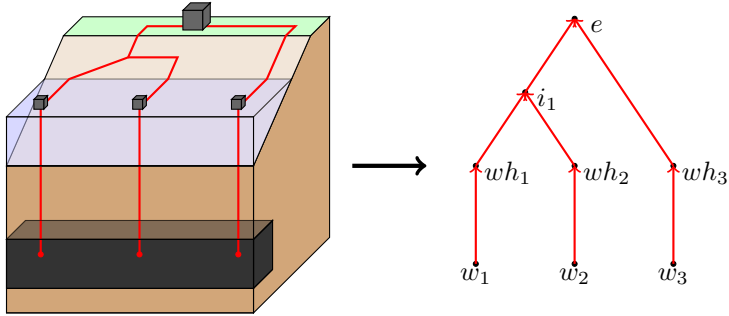
\begin{figure*}[!htp]
  \centering
  \begin{tikzpicture}[scale=0.65]

    \coordinate (bottright) at (2.5, -4, 0);
    \coordinate (bottrightsea) at (2.5, -1, 0);
    \coordinate (bottrightsshore) at (2.5, 0.5, -3);
    \coordinate (endrightsshore) at (2.5, 0.5, -4);
    \coordinate (endright) at (2.5, -4, -4);

    \coordinate (bottleft) at (-2.5, -4, 0);
    \coordinate (bottleftsea) at (-2.5, -1, 0);
    \coordinate (bottleftsshore) at (-2.5, 0.5, -3);
    \coordinate (endleftsshore) at (-2.5, 0.5, -4);
    \coordinate (endleft) at (-2.5, -4, -4);

    \filldraw[brown!70,draw=black] (bottright) -- (bottrightsea)
    -- (bottrightsshore) -- (endrightsshore) -- (endright) -- cycle;
    \filldraw[brown!70,draw=black] (bottleft) -- (bottleftsea)
    -- (bottrightsea) -- (bottright) -- cycle;
    \filldraw[brown!20,draw=black] (bottleftsea) -- (bottleftsshore)
    -- (bottrightsshore) -- (bottrightsea) -- cycle;
    \filldraw[green!20,draw=black] (bottleftsshore) -- (endleftsshore)
    -- (endrightsshore) -- (bottrightsshore) -- cycle;

    \coordinate (sealvlright) at (2.5, 0, 0);
    \coordinate (endsealvlright) at (2.5, 0, -2);
    \coordinate (endsealvlleft) at (-2.5, 0, -2);
    \coordinate (sealvlleft) at (-2.5, 0, 0);

    \filldraw[blue!20,draw=black, opacity=0.5] (bottrightsea) -- (sealvlright)
    -- (endsealvlright) -- cycle;
    \filldraw[blue!20,draw=black, opacity=0.5] (bottleftsea) -- (sealvlleft)
    -- (endsealvlleft) -- cycle;
    \filldraw[blue!20,draw=black, opacity=0.5] (bottleftsea) -- (sealvlleft)
    -- (sealvlright) -- (bottrightsea) -- cycle;
    \filldraw[blue!20,draw=black, opacity=0.5] (sealvlleft) -- (endsealvlleft)
    -- (endsealvlright) -- (sealvlright) -- cycle;

    \coordinate (bottrightres) at (2.5, -3.5, 0);
    \coordinate (bottendrightres) at (2.5, -3.5, -1);
    \coordinate (toprightres) at (2.5, -2.5, 0);
    \coordinate (topendrightres) at (2.5, -2.5, -1);
    \coordinate (bottleftres) at (-2.5, -3.5, 0);
    \coordinate (bottendleftres) at (-2.5, -3.5, -1);
    \coordinate (topleftres) at (-2.5, -2.5, 0);
    \coordinate (topendleftres) at (-2.5, -2.5, -1);

    \filldraw[black!80,draw=black] (bottrightres) -- (bottendrightres)
    -- (topendrightres) -- (toprightres) -- cycle;
    \filldraw[black!80,draw=black] (bottleftres) -- (topleftres)
    -- (toprightres) -- (bottrightres) -- cycle;
    \filldraw[black!80,draw=black, opacity=0.5] (topleftres) -- (topendleftres)
    -- (topendrightres) -- (toprightres) -- cycle;

    \coordinate (w1) at (-2, -3, -0.5);
    \coordinate (w2) at (0, -3, -0.5);
    \coordinate (w3) at (2, -3, -0.5);

    \coordinate (wh1) at (-2, 0, -0.5);
    \coordinate (wh2) at (0, 0, -0.5);
    \coordinate (wh3) at (2, 0, -0.5);

    \filldraw[red] (w1) circle (0.05cm);
    \filldraw[red] (w2) circle (0.05cm);
    \filldraw[red] (w3) circle (0.05cm);

    \draw[red, thick] (w1) -- (wh1);
    \draw[red, thick] (w2) -- (wh2);
    \draw[red, thick] (w3) -- (wh3);

    \coordinate (exit) at (0, 0.5, -3.5);

    \draw[red, thick] (wh1) -- ++(0,0,-1.5) -- ++(1,0.25,-0.5)
    -- ++(0,0.25,-0.5) -- ++(0,0,-0.5) -- (exit);
    \draw[red, thick] (wh2) -- ++(0,0,-1.5) -- ++(0,0.25,-0.5)
    -- ++(-1,0,0);
    \draw[red, thick] (wh3) -- ++(0,0,-1.5) -- ++(0,0.5,-1)
    -- ++(0,0,-0.5) -- (exit);

    \filldraw[black!60, draw=black] (wh1)++(0.1,0,0.1) -- ++(0,0,-0.2)
    -- ++(0,0.2,0) -- ++(0,0,0.2) -- cycle;
    \filldraw[black!60, draw=black] (wh1)++(0.1,0.2,0.1) -- ++(0,0,-0.2)
    -- ++(-0.2,0,0) -- ++(0,0,0.2) -- cycle;
    \filldraw[black!60, draw=black] (wh1)++(0.1,0.2,0.1) -- ++(-0.2,0,0)
    -- ++(0,-0.2,0) -- ++(0.2,0,0) -- cycle;

    \filldraw[black!60, draw=black] (wh2)++(0.1,0,0.1) -- ++(0,0,-0.2)
    -- ++(0,0.2,0) -- ++(0,0,0.2) -- cycle;
    \filldraw[black!60, draw=black] (wh2)++(0.1,0.2,0.1) -- ++(0,0,-0.2)
    -- ++(-0.2,0,0) -- ++(0,0,0.2) -- cycle;
    \filldraw[black!60, draw=black] (wh2)++(0.1,0.2,0.1) -- ++(-0.2,0,0)
    -- ++(0,-0.2,0) -- ++(0.2,0,0) -- cycle;

    \filldraw[black!60, draw=black] (wh3)++(0.1,0,0.1) -- ++(0,0,-0.2)
    -- ++(0,0.2,0) -- ++(0,0,0.2) -- cycle;
    \filldraw[black!60, draw=black] (wh3)++(0.1,0.2,0.1) -- ++(0,0,-0.2)
    -- ++(-0.2,0,0) -- ++(0,0,0.2) -- cycle;
    \filldraw[black!60, draw=black] (wh3)++(0.1,0.2,0.1) -- ++(-0.2,0,0)
    -- ++(0,-0.2,0) -- ++(0.2,0,0) -- cycle;

    \filldraw[black!60, draw=black] (exit)++(0.2,0,0.2) -- ++(0,0,-0.4)
    -- ++(0,0.4,0) -- ++(0,0,0.4) -- cycle;
    \filldraw[black!60, draw=black] (exit)++(0.2,0.4,0.2) -- ++(0,0,-0.4)
    -- ++(-0.4,0,0) -- ++(0,0,0.4) -- cycle;
    \filldraw[black!60, draw=black] (exit)++(0.2,0.4,0.2) -- ++(-0.4,0,0)
    -- ++(0,-0.4,0) -- ++(0.4,0,0) -- cycle;

    \draw[black, ultra thick, ->] (4.5,-1)--(6,-1);
    
    \draw[black] (bottleft) -- (bottleftsea)
    -- (bottrightsea) -- (bottright) -- cycle;

    \draw[black] (bottleftsea) -- (sealvlleft)
    -- (sealvlright) -- (bottrightsea) -- cycle;

    \draw[black] (bottrightres) -- (bottendrightres)
    -- (topendrightres) -- (toprightres) -- cycle;
    \draw[black] (bottleftres) -- (topleftres)
    -- (toprightres) -- (bottrightres) -- cycle;
    \draw[black, opacity=0.5] (topleftres) -- (topendleftres)
    -- (topendrightres) -- (toprightres) -- cycle;


    \filldraw[black] (7,-3) circle (0.05cm);
    \node at (7, -3.25) {\noir{$w_1$}};
    \filldraw[black] (9,-3) circle (0.05cm);
    \node at (9, -3.25) {\noir{$w_2$}};
    \filldraw[black] (11,-3) circle (0.05cm);
    \node at (11, -3.25) {\noir{$w_3$}};
    \filldraw[black] (7,-1) circle (0.05cm);
    \node at (7.65, -1.15) {$\bleue{wh_1}$};
    \filldraw[black] (9,-1) circle (0.05cm);
    \node at (9.65, -1.15) {$\bleue{wh_2}$};
    \filldraw[black] (11,-1) circle (0.05cm);
    \node at (11.65, -1.15) {$\bleue{wh_3}$};
    \filldraw[black] (8,0.5) circle (0.05cm);
    \node at (8.45, 0.35) {$\bleue{i_1}$};
    \filldraw[black] (9,2) circle (0.05cm);
    \node at (9.45, 1.85) {\noir{$e$}};
    \draw[red][<-, thick]  (7,-1)--(7,-3);
    \draw[red][<-, thick]  (9,-1)--(9,-3);
    \draw[red][<-, thick]  (11,-1)--(11,-3);
    \draw[red][->, thick]  (7,-1)--(8,0.5);
    \draw[red][->, thick] (9,-1)--(8,0.5);
    \draw[red][->, thick] (8,0.5)--(9,2);
    \draw[red][->, thick] (11,-1)--(9,2);

  \end{tikzpicture}
  \caption{Representing a production network as a graph}
  \label{fig:drawing_network}
\end{figure*}

All the relevant operational constraints and features - such as pressure loss on the
pipes, mass balance of the fluids at each vertex, allowed pressures, and flow rate ranges
in the different assets or unavailability due to maintenance - are modeled as constraints
using variables defined on the edges and vertices of the graph. Indeed, the graph allows
us to define the different controls we can apply to the system, such as opening or closing
valves or changing the well-head pressures.
Detailed formulations on the production network can be seen in~\citep{GUPTA2012}. We will not explicit it in the general case as
this is not our main focus, and we only present numerical applications without taking into
account the production network.


As we aim to optimize the system over the whole production phase (i.e. over multiple
years), we consider multiple time steps belonging to a finite set
$\timeset = \na{0, 1, 2, \dots, \horizon}$ where the parameter $\horizon$ is a natural
number. Those time steps are usually monthly\footnote{Numerical applications will be done
  with monthly time steps and a horizon $\horizon$ of 15 or 20 years}, but under certain
conditions other time steps may be considered.

We propose (and are going to detail) a general formulation of the petroleum production
system optimization problem as follows
\begin{subequations}
  \begin{align}
      \bellval^{\star}(\states_0) = \max_{\states, \controls} ~ ~&
      \sum_{\timeindex=0}^{\horizon-1} \rho^{\timeindex} \costfunct_{\timeindex}
      (\states_{\timeindex}, \controls_{\timeindex})
      + \rho^{\horizon} \finalcost(\states_{\horizon})
      \label{eq:obj_general}\\
      s.t. ~~&
      \states_{0} ~ given \eqfinv
      \label{eq:state_initialisation} \\
      &
      \states_{\timeindex+1} =
      \dynamics(\states_{\timeindex}, \controls_{\timeindex})
      \eqsepv \forall \timeindex \in \timeset \setminus \{\horizon\}
      \eqfinv
      \label{eq:dynamics_general}\\
      &
      \controls_{\timeindex} \in
      \admcontrolset_{\timeindex}(\states_{\timeindex})
      \eqsepv \forall \timeindex \in \timeset \setminus \{\horizon\}
      \eqfinp
      \label{eq:controls_admissibility_general}
  \end{align}
  \label{eq:general_formulation}
\end{subequations}

The variables in Problem~\eqref{eq:general_formulation} are: \textit{i}) the state of the
reservoir~$\states_{\timeindex} \in \XX \subset \RR^n$ (with $\XX$ the state space);
\textit{ii}) the controls~$\controls_{\timeindex} \in \UU \subset \RR^p$ (with $\UU$ the
control space), which are the decisions that can be taken at time step~$\timeindex$ (for
example, the pressure $\presvar_{\vertexindex, \timeindex}$ at the different vertices
$\vertexindex \in \vertexset$ of the graph, and the Boolean
$\pipeboolvar_{\arcindex, \timeindex}$ stating if a pipe $\arcindex \in \arcset$ of the
graph is opened or closed). The reservoir is defined as a controlled dynamical system,
with state~$\states_{\timeindex}$, control~$\controls_{\timeindex}$ and an evolution
function $\dynamics$ of the controlled dynamical system, whose construction is the focus
of Section~\ref{sect:Reservoir_Dynamical_System}. At every time step~$\timeindex$, when
the decision maker takes decision~$\controls_{\timeindex}$, an instantaneous gain denoted
by $\costfunct_{\timeindex}(\states_{\timeindex}, \controls_{\timeindex})$ occurs. In the
last stage, the final state~$\states_{\horizon}$ is valued as
$\mathcal{K}(\states_{\horizon})$. We denote by $\rho$ the discount factor. We finally
obtain the objective function seen to the right of the max in
Equation~\eqref{eq:obj_general} by adding all terms. The known initial state of the
reservoir is defined in Equation~\eqref{eq:state_initialisation}. The controlled dynamics
of the reservoir is given in Equation~\eqref{eq:dynamics_general}.
Equation~\eqref{eq:controls_admissibility_general} states that, at each time
step~$\timeindex$, the allowed controls belong to an admissibility set that depends
on~$\states_{\timeindex}$. The dependence is noted by
$\admcontrolset_{\timeindex}(\states_{\timeindex})$, which is for each time
step~$\timeindex$ a set-valued mapping that takes a given state~$\states_{\timeindex}$ of
the reservoir and returns the set of allowed controls. As far as the petroleum
application is concerned, the admissibility set notably depends on
the reservoir pressure, which constrains the different pressures in the petroleum
production system. It also depends on the production network itself:
some pipes can be controlled, while others cannot; facilities have planned or unplanned
downtimes, etc.
Extensive formulations of the admissibility set of the production
depending on the reservoir pressure can be seen in~\cite{Grossmann98}.

The petroleum production system optimization problem, as formulated
in~\eqref{eq:general_formulation}, is a classical deterministic discrete time optimal
control problem.
It is known that this problem can be solved by dynamic programming and that the resulting optimal
control at time $t$ is a function of the current state at time $t$.

In order to solve Problem~\eqref{eq:general_formulation}, we use a family of value
functions $\bellval_{\timeindex}:\XX \mapsto \RR$, where we recall that $\XX$ is the state
space. We call
\emph{policy}~$\policyinst = \left\{ \policyinst_{0}, \dots,
  \policyinst_{\horizon-1}\right\}$ a set of
mappings~$\policyinst_{\timeindex}:\XX \to \UU$ from states~$\states$ into
admissible controls $\controls$. We have the following proposition
(see~\citep[Chap. 1]{bertsekas_dynamic}).
\begin{proposition}
  For every initial state~$\states_0 \in \XX$, the optimal cost $\bellval^*(\states_0)$ of
  Problem~\eqref{eq:general_formulation} is equal to $\bellval_{0}(\states_0)$, given by
  the last step of the following algorithm, which proceeds backward in time from final
  time step $\horizon$ to initial time step $0$:
  \begin{subequations}
    \begin{align}
      \bellval_{\horizon}(\states)
      & = \rho^{\horizon} \finalcost(\states)
        \eqsepv \forall \states \in \XX
        \eqsepv
        \label{eq:prog_dyn_final}
      \\
      \bellval_{\timeindex}(\states)
      & =
        \max_{ \controls \in \admcontrolset_{\timeindex}(\states)}
        \Big(
        \rho^{\timeindex} \costfunct_{\timeindex}(\states,
        \controls)
        \nonumber \\
      & \hspace{0cm}
        + \bellval_{\timeindex+1}\left(
        \dynamics(\states, \controls)\right)
        \Big)
        \eqsepv \forall \states \in \XX,
        \forall \timeindex \in \timeset \setminus \left\{ \horizon \right\}
        \eqfinp
        \label{eq:prog_dyn_rec}
    \end{align}
  \end{subequations}
  Furthermore, if
  $\controls^{\star}=\policyinst_{\timeindex}^{\star}(\states)$
  maximizes the right-hand side of~\eqref{eq:prog_dyn_rec} for each $\states$
  and $\timeindex$, then the policy $\policyinst^{\star}= \left\{\policyinst_{0}^{\star},
  \dots, \policyinst_{\horizon-1}^{\star} \right\}$ is optimal.
\end{proposition}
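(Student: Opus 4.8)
The plan is to prove the result by \emph{backward induction} on the time index $\timeindex$, comparing the functions $\bellval_{\timeindex}$ generated by~\eqref{eq:prog_dyn_final}--\eqref{eq:prog_dyn_rec} with the genuine optimal value of the truncated (``tail'') problem that starts at time $\timeindex$. First I would fix some terminology: given a state $\states \in \XX$ and a time $\timeindex \in \timeset$, call a control sequence $(\controls_{\timeindex}, \dots, \controls_{\horizon-1})$ \emph{admissible from $(\states,\timeindex)$} if, setting $\states_{\timeindex} = \states$ and propagating by $\states_{k+1} = \dynamics(\states_k, \controls_k)$, one has $\controls_k \in \admcontrolset_k(\states_k)$ for all $k \in \{\timeindex, \dots, \horizon-1\}$. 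I would then define the optimal tail value
\begin{equation*}
  \bellval^{\star}_{\timeindex}(\states)
  = \max_{(\controls_{\timeindex}, \dots, \controls_{\horizon-1})}
  \left( \sum_{k=\timeindex}^{\horizon-1} \rho^{k}
  \costfunct_{k}(\states_k, \controls_k)
  + \rho^{\horizon} \finalcost(\states_{\horizon}) \right) ,
\end{equation*}
the maximum ranging over all sequences admissible from $(\states, \timeindex)$, with $\states_k$ the induced trajectory. Since the objective~\eqref{eq:obj_general} and constraints~\eqref{eq:state_initialisation}--\eqref{eq:controls_admissibility_general} of Problem~\eqref{eq:general_formulation} are exactly the tail problem at $\timeindex = 0$, one has $\bellval^{\star}(\states_0) = \bellval^{\star}_{0}(\states_0)$, so it suffices to establish $\bellval^{\star}_{\timeindex} = \bellval_{\timeindex}$ on $\XX$ for all $\timeindex$.

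The base case $\timeindex = \horizon$ is immediate: no control remains, the empty sequence gives value $\rho^{\horizon}\finalcost(\states)$, and this matches $\bellval_{\horizon}(\states)$ by~\eqref{eq:prog_dyn_final}. For the inductive step I would assume $\bellval^{\star}_{\timeindex+1} = \bellval_{\timeindex+1}$ and decompose the tail cost from $(\states,\timeindex)$ as the instantaneous term $\rho^{\timeindex}\costfunct_{\timeindex}(\states, \controls_{\timeindex})$, which depends only on the first control, plus the tail cost from $(\dynamics(\states, \controls_{\timeindex}),\, \timeindex+1)$. The crucial observation is that, once $\controls_{\timeindex}$ is fixed, both the admissibility and the contribution of the remaining controls $(\controls_{\timeindex+1}, \dots, \controls_{\horizon-1})$ depend on $\controls_{\timeindex}$ \emph{only through} the successor state $\dynamics(\states, \controls_{\timeindex})$. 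Hence the joint maximum factorizes into an outer maximum over $\controls_{\timeindex} \in \admcontrolset_{\timeindex}(\states)$ of $\rho^{\timeindex}\costfunct_{\timeindex}(\states, \controls_{\timeindex})$ plus the inner maximum $\bellval^{\star}_{\timeindex+1}(\dynamics(\states, \controls_{\timeindex}))$. Replacing the latter by $\bellval_{\timeindex+1}(\dynamics(\states, \controls_{\timeindex}))$ via the induction hypothesis reproduces verbatim the right-hand side of~\eqref{eq:prog_dyn_rec}, giving $\bellval^{\star}_{\timeindex} = \bellval_{\timeindex}$ and, at $\timeindex = 0$, the claimed equality $\bellval^{\star}(\states_0) = \bellval_{0}(\states_0)$.

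For the optimality of the greedy policy $\policyinst^{\star}$, I would run this decomposition forward. The trajectory $(\states^{\star}_{\timeindex})$ it generates is admissible by construction, each $\controls^{\star}_{\timeindex} = \policyinst^{\star}_{\timeindex}(\states^{\star}_{\timeindex})$ lying in $\admcontrolset_{\timeindex}(\states^{\star}_{\timeindex})$; and because $\controls^{\star}_{\timeindex}$ attains the maximum in~\eqref{eq:prog_dyn_rec}, we have $\bellval_{\timeindex}(\states^{\star}_{\timeindex}) = \rho^{\timeindex}\costfunct_{\timeindex}(\states^{\star}_{\timeindex}, \controls^{\star}_{\timeindex}) + \bellval_{\timeindex+1}(\states^{\star}_{\timeindex+1})$ at every stage. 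Summing these identities from $\timeindex = 0$ to $\horizon-1$ telescopes, using~\eqref{eq:prog_dyn_final}, to show that the total payoff of this trajectory equals $\bellval_{0}(\states_0) = \bellval^{\star}(\states_0)$; as no admissible sequence can exceed $\bellval^{\star}(\states_0)$, the policy $\policyinst^{\star}$ is optimal.

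The step I expect to demand the most care is the \emph{factorization of the maximum} in the inductive step: it is licit precisely because the problem is Markovian --- the dynamics~\eqref{eq:dynamics_general} and each admissibility map $\admcontrolset_k$ couple a control to the history only through the current state --- so that fixing $\controls_{\timeindex}$ reduces the remainder \emph{exactly} to the tail problem at the successor state, with no residual coupling to $\states$ or to $\controls_{\timeindex}$. A secondary technical point is the attainment of the maxima appearing in~\eqref{eq:prog_dyn_rec} and in the definition of $\bellval^{\star}_{\timeindex}$; I would either assume it, as the statement tacitly does by writing $\max$ and by positing a maximizer $\controls^{\star} = \policyinst^{\star}_{\timeindex}(\states)$, or else carry the value equalities with $\sup$ and invoke the attainment hypothesis only at the final policy-construction step.
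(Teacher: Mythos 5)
Your proof is correct and is essentially the argument the paper relies on: the paper gives no proof of its own but cites Bertsekas, Chapter~1, where exactly this backward induction on tail subproblems (with the Markovian factorization of the maximum and the telescoping argument for policy optimality) is carried out. Nothing is missing.
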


To solve Problem~\eqref{eq:general_formulation}, we compute $\bellval_{0}$. To do so, we
use a dynamic programming algorithm (see Algorithm~\ref{alg:dynamic_programming}). For
that purpose, we discretize the controls, that now belong to a finite set denoted
by~$\UU_d$, and the states that belong to a finite set~$\XX_d$. Numerically, we also use a
multi-linear interpolation for the value functions between the states.
\begin{algorithm}[htp!]
  \For{$\states \in \XX_d$}
  {
    $\bellval_{\horizon}(\states) = \rho^{\horizon} \finalcost(\states)$\;
  }

  \For{$\timeindex =  \horizon-1, \ldots, 1$}
  {
    \For{$\states \in \XX_d$}
    {
      best\_value = - $\infty$\;
      best\_controls = 0\;
      \For{$\controls \in \UU_d$}
      {
        current\_value = $\rho^{\timeindex} \costfunct_{\timeindex}(\states,
        \controls) + \bellval_{\timeindex+1}\Bp{\dynamics \bp{\states, \controls} }$\;
        \If{\emph{current\_value} $\geq$ \emph{best\_value}}
        {
          best\_value = current\_value\;
          best\_controls = $\controls$\;
        }
      }
      $\bellval_{\timeindex}(\states) = $best\_value\;
      $\policyinst_{\timeindex}(\states) = $best\_controls\;
    }
  }

  \Return{$\bp{\bellval_{\timeindex}, \policyinst_{\timeindex}}_{{\timeindex}\in\timeset}$}
  \caption{dynamic programming algorithm used to solve
    Problem~\eqref{eq:general_formulation}}
  \label{alg:dynamic_programming}
\end{algorithm}

\section{Formulation of the reservoir extraction as a controlled dynamical system}
\label{sect:Reservoir_Dynamical_System}

In this section, we show how to represent the time evolution of the reservoir as a
dynamical system, that is, involving a state~$\states$, a control~$\controls$
and an evolution function~$\dynamics$ such that, for each time step $\timeindex$, we have
$\states_{\timeindex+1} = \dynamics(\states_{\timeindex}, \controls_{\timeindex})$. It is
shown in Appendix~\ref{sect:Formulation_details} that a possible state - which is the one
we henceforth consider,
for modeling the reservoir when using the black-oil model and conservation laws for a
tank-like reservoir - is the 5-dimensional vector
$\states_{\timeindex}=\left( \totalreservoiroil_{\timeindex},
  \totalreservoirgas_{\timeindex}, \totalreservoirwater_{\timeindex},
  \porevolume_{\timeindex}, \respressure_{\timeindex} \right)$. Its components are defined
in Table~\ref{tab:state_definition}, where Sm$^3$ stands for standard cubic meter (the
volume taken by a fluid at standard pressure and temperature condition: 1.01325 Bara and
$15^{\circ}$C), and Bara stands for absolute pressure in Bar.
\begin{savenotes}
\begin{table}[htbp!]
  \centering
  \begin{tabular}{cl}
    \toprule
    Symbol                              & Definition                                    \\
    \midrule
    $\totalreservoiroil_{\timeindex}$   & Amount of oil in the reservoir (Sm$^3$)
                                         at time $\timeindex$                 \\
    $\totalreservoirgas_{\timeindex}$   & Amount of free gas in the reservoir (Sm$^3$) at
                                         time $\timeindex$                              \\
    $\totalreservoirwater_{\timeindex}$ & Amount of water in the reservoir (Sm$^3$) at
                                         time $\timeindex$                              \\
    $\porevolume_{\timeindex}$          & Total pore volume of the reservoir (m$^3$) at
                                         time $\timeindex$                              \\
    $\respressure_{\timeindex}$         & Reservoir pressure (Bara) at
                                         time $\timeindex$                              \\
    \bottomrule
  \end{tabular}
  \caption{Definition of the components of the state\label{tab:state_definition}}
\end{table}
\end{savenotes}

More precisely, to obtain the evolution function~$\dynamics$ of the content of the
reservoir between time $\timeindex$ and $\timeindex+1$, we compute the amounts of fluids
(oil, gas, water) produced during the period $[\timeindex, \timeindex+1[$. We denote them
by
$\np{\oilproduced_{\timeindex}, \gasproduced_{\timeindex}, \waterproduced_{\timeindex}}$
and they are described in Table~\ref{tab:control_definition}. We obtain the production
values with a mapping
$\GeneralProductionFunction = \np{\GeneralProductionFunction^{(1)},
  \GeneralProductionFunction^{(2)}, \GeneralProductionFunction^{(3)}}: \XX \times \UU \to
\RR^3$ such that
$\left( \oilproduced_{\timeindex}, \gasproduced_{\timeindex},
  \waterproduced_{\timeindex}\right) = \GeneralProductionFunction(\states, \controls)$.
The production mapping~$\GeneralProductionFunction$ depends on the form and specifications
of the production network. We present two examples of such $\GeneralProductionFunction$ in
the numerical applications of Section~\ref{sect:numerical_applications}, with details in
Appendix~\ref{sect:Formulation_details}.

\begin{table}[htbp!]
  \centering
  \begin{tabular}{cl}
    \toprule
    Symbol                      & Definition                               \\
    \midrule
    $\oilproduced_{\timeindex}$   & Volume of oil produced (Sm$^3$) during
                                    $[\timeindex, \timeindex+1[$           \\
    $\gasproduced_{\timeindex}$   & Volume of gas produced ($Sm^3$) during
                                    $[\timeindex, \timeindex+1[ $          \\
    $\waterproduced_{\timeindex}$ & Volume of water produced ($Sm^3$) during
                                    $[\timeindex, \timeindex+1[$           \\
    \bottomrule
  \end{tabular}
  \caption{Definition of the productions\label{tab:control_definition}}
\end{table}

We make the following assumptions on the reservoir (as formulated in~\citet{Dake}): first,
the fluids contained in the reservoir follow a \emph{black-oil} model; second, we consider
that we have a tank-like reservoir. Thanks to those two standards assumptions, we can
formulate the reservoir and the production system as a controlled dynamical system.

\begin{proposition}
  There exists a function $\Xi: \XX \times \UU \rightarrow \RR$ such that the following
  function $\dynamics: \XX \times \UU \rightarrow \RR^{5}$
  \begin{equation}
    \dynamics: (\states, \controls)
    \mapsto
    \left( \begin{array}{l}
        \states^{(1)}-\GeneralProductionFunction^{(1)}(\states, \controls)\\
        \begin{multlined}
          \states^{(2)}-\GeneralProductionFunction^{(2)}(\states, \controls)
          +\Big[ \states^{(1)} \solutiongasoilratio(\states^{(5)}) \\[-3ex] %
          - \bp{\states^{(1)}-\GeneralProductionFunction^{(1)}(\states, \controls)}
          \solutiongasoilratio \bp{\Xi(\states, \controls)}
          \Big]
        \end{multlined}
        \\[4ex] 
        \states^{(3)}-\GeneralProductionFunction^{(3)}(\states, \controls)\\
        \states^{(4)} \left(1+\porecompressibility
          (\Xi(\states, \controls)-\states^{(5)})
        \right)\\
        \Xi(\states, \controls)
      \end{array}\right)
    \label{eq:dynamics_expression}
  \end{equation}
  is the dynamics of the reservoir in~\eqref{eq:dynamics_general} (with
  $x = (x^{(1)}, \dots, x^{(5)})$, $\solutiongasoilratio$ a given function of the
  reservoir pressure called the \emph{solution gas} function, and $\porecompressibility$ a
  given parameter called the \emph{pore compressibility} of the reservoir).
  \label{prop:dynamic_formulation}
\end{proposition}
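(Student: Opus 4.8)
The plan is to recognize that the auxiliary function $\Xi(\states,\controls)$ is nothing but the reservoir pressure at the next time step, $\respressure_{\timeindex+1}$, and that the five coordinates of $\dynamics$ in~\eqref{eq:dynamics_expression} are precisely the black-oil conservation laws for a tank-like reservoir, the fifth of which is an implicit volumetric constraint that determines $\Xi$. Once $\Xi$ is known, the first four coordinates are explicit, so the whole content of the statement is the existence of a pressure $\Xi$ solving that constraint.

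First I would write the conservation laws in standard volumes. Oil and water are neither created nor transferred between phases, so their standard volumes simply decrease by the produced amounts, giving the first and third coordinates $\states^{(1)}-\GeneralProductionFunction^{(1)}(\states,\controls)$ and $\states^{(3)}-\GeneralProductionFunction^{(3)}(\states,\controls)$. Gas needs more care, since it is present both as free gas and as gas dissolved in the oil. The total gas in standard volume is the free gas $\states^{(2)}$ plus the dissolved gas $\states^{(1)}\solutiongasoilratio(\states^{(5)})$; after producing $\GeneralProductionFunction^{(2)}(\states,\controls)$ and re-equilibrating at the new pressure $\Xi$, the remaining oil $\states^{(1)}-\GeneralProductionFunction^{(1)}(\states,\controls)$ holds $\bigl(\states^{(1)}-\GeneralProductionFunction^{(1)}(\states,\controls)\bigr)\solutiongasoilratio(\Xi)$ of dissolved gas, and the free gas is the remainder. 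Subtracting yields exactly the bracketed second coordinate of~\eqref{eq:dynamics_expression}. The fourth coordinate is the rock-compressibility constitutive law: a pressure change $\Xi-\states^{(5)}$ induces the relative pore-volume change $\porecompressibility(\Xi-\states^{(5)})$, so that $\states^{(4)}\bigl(1+\porecompressibility(\Xi-\states^{(5)})\bigr)$ is the new pore volume. The fifth coordinate is $\Xi$ by definition.

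The heart of the proof is to produce $\Xi$. The tank-like assumption forces a volumetric consistency relation: at the new pressure, the reservoir volumes occupied by oil, free gas and water must exactly fill the new pore volume. Denoting by $\oilformationfactor$, $\gasformationfactor$, $\waterformationfactor$ the oil, gas and water formation volume factors (each a function of pressure) and writing the first four coordinates of $\dynamics$ as functions of $\Xi$, this reads
\[
  \text{(next oil)}\,\oilformationfactor(\Xi)
  + \text{(next free gas)}\,\gasformationfactor(\Xi)
  + \text{(next water)}\,\waterformationfactor(\Xi)
  = \text{(next pore volume)} .
\]
Substituting the four expressions obtained above turns this into a single scalar equation $\Psi(\Xi;\states,\controls)=0$. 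I would establish existence of a root by the intermediate value theorem, using continuity of the black-oil correlations, and uniqueness by showing $\Psi$ is strictly monotone in $\Xi$; this defines $\Xi(\states,\controls)$ and completes the argument.

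The main obstacle is this last step. As the pressure drops, $\gasformationfactor(\Xi)$ grows sharply (gas expansion, amplified by gas coming out of solution as $\solutiongasoilratio$ decreases) while the available pore volume shrinks, whereas $\oilformationfactor$ and $\waterformationfactor$ vary mildly; one must check that the competition of these effects makes $\Psi$ strictly monotone on the admissible pressure range and that its endpoint values straddle zero. This requires the standard qualitative hypotheses on the black-oil PVT functions and particular care at the bubble point $\bubpressure$, where $\solutiongasoilratio$ and $\oilformationfactor$ switch regime between the undersaturated and saturated states.
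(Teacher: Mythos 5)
Your proposal follows essentially the same route as the paper's proof in Appendix~\ref{sect:Formulation_details}: the first four coordinates come from the material balance equations~\eqref{eq:MBalOil_bis}, \eqref{eq:MBalWater_bis}, \eqref{eq:MBalGas_bis} and the linearized pore-compressibility law~\eqref{eq:PoreCompression_bis}, while $\Xi$ is defined implicitly by the volumetric closure condition~\eqref{eq:pore_equals_fluids_bis}, solved by the same monotonicity argument (decreasing left-hand side versus increasing right-hand side). The monotonicity you flag as the main obstacle is exactly the point the paper resolves by invoking the standard black-oil property from \citet[chap.~2]{danesh}, namely that the aggregate in-situ volume~\eqref{eq:volume_equality} is a decreasing function of the reservoir pressure.
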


\begin{proof}
  See Appendix~\ref{sect:Formulation_details}.
\end{proof}

\section{Two numerical applications}
\label{sect:numerical_applications}
We now present two numerical applications that illustrate how the material balance
formulation can be used. The numerical applications are done on simple reservoirs. In
\S\ref{subsect:2tanks_numerical_case}, the first application is a gas reservoir that can be
modeled with two tanks and with a connection, of known transmissivity, linking them
together. It illustrates how the formulation can be applied to complex cases with multiple
tanks. In \S\ref{subsect:oil_wi_numerical_case}, the second application we consider is an
oil reservoir where pressure is kept constant through water injection. This shows how we
can take into account injection to go beyond the first recovery of oil and gas. All
numerical applications were performed on a computer equipped with a Core i7-4700K and
16~GB of memory.

\subsection{A gas reservoir with one well}
\label{subsect:2tanks_numerical_case}
In the first application, we consider a real gas reservoir, for which production data are
available. The recorded data come from a field approaching abandonment. We only
considered a sub-field of a much larger field, the sub-field being constituted of an
isolated reservoir with one well.

Our goal here is to show how simple cases can be tackled with the material balance
formulation, and that the formulation can also be applied to cases with multiple tanks. We
first present a state reduction of this case. We then present a model with one tank, and
then a model with two tanks, mimicking an evolutive construction of the reservoir model.
Indeed, when optimizing a real petroleum production system, the models are improved as
data are analyzed. Hence, reservoir models will get more complex to fit the gathered
exploitation data, such as going from a one tank model to a two tanks model. We therefore
present the models following such timetable, going from a cruder to a more refined
reservoir model.

\paragraph{Characteristics of the case.}
The geology of this particular sub-field makes it perfect for a tank model, as proved by
many years of perfectly matched production. Also, the simplicity of the fluids with a high
methane purity makes the black-oil model a very realistic assumption. The reservoir can be
modeled with either one or two tanks, while the well’s perforations are modeled with a
known stationary inflow performance relationship, noted
$\InflowPerformanceRelationship\gassuper$. The two tanks model is illustrated in
Figure~\ref{fig:two_coupled_reservoir_one_well}. We do not consider the rest of the
network, so that we will not have to take into account any vertical lift performance (VLP)
necessary to lift oil to the surface. This implies that the only control we consider is
the bottom hole flowing pressure (BHFP), $\presvar_{\timeindex}$, resulting in the problem
known as \emph{optimization at the bottom of the well}. We hence assume that there is no
``pipe'' necessary to move gas from the reservoir to the surface, thus assuming that the
network is only constituted of the well-perforations which allow the production of gas.
Indeed, optimizing with the bottom hole flowing pressure makes it easier to compare the
different reservoir models, as we directly act on the reservoir. Adding the vertical lift
performance only adds a layer of complexity to the comparison of the models, while the
only benefit would be to get results closer to an actual field production. All in all,
adding the vertical lift performance only adds more constraints on the mathematical
formulation and may mask the impact of the reservoir model. As the focus of this paper is
to present a formulation with a new reservoir model, we decided not to take into account
the vertical lift performance. We also did not try to go beyond the two tanks model.


\begin{figure}[ht!]
  \begin{center}
    \begin{tikzpicture}
      \coordinate (well1) at (0, 0);
      \coordinate (well2) at (1, 0);
      \coordinate (well3) at (0.5, 0.86);

      \coordinate (res_1_bl) at (1.5, 0);
      \coordinate (res_1_tr) at (2.5, 1);
      \coordinate (res_2_bl) at (3, 0);
      \coordinate (res_2_tr) at (4, 1);

      \draw[thick,black] (well1)--(well2)--(well3)--cycle;
      \draw[thick,black] (res_1_bl) rectangle (res_1_tr);
      \draw[thick,black] (res_2_bl) rectangle (res_2_tr);

      \coordinate (well_con) at (0.71, 0.5);
      \coordinate (res_1_w_con) at (1.5, 0.5);
      \coordinate (res_1_2_con) at (2.5, 0.5);
      \coordinate (res_2_1_con) at (3, 0.5);

      \draw[thick, red] (well_con) -- (res_1_w_con);
      \draw[thick, red] (res_1_2_con) -- (res_2_1_con);

      \node (w) at (0, 1.5) {\noir{well’s perforations}};
      \node (res1) at (2, 2) {\noir{first tank}};
      \node (res2) at (4, 1.5) {\noir{second tank}};
      \draw[thick,black,->] (w)--++(0.5,-1);
      \draw[thick,black,->] (res1)--++(0,-1.5);
      \draw[thick,black,->] (res2)--++(-0.5,-1);
      \node[align=center] (connexion_1t_2t) at (3.5,-1) {assumed \\ transmissivity};
      \draw[thick,black,->] (connexion_1t_2t)--(2.75, -0.25)--(2.75,0.5);
      \node (connexion_well_1t) at (0.5,-1) {$\InflowPerformanceRelationship\gassuper$};
      \draw[thick,black,->] (connexion_well_1t)--(1.2, -0.25)--(1.2,0.5);

    \end{tikzpicture}
    \caption{Representation of the two tanks model}
    \label{fig:two_coupled_reservoir_one_well}
  \end{center}
\end{figure}
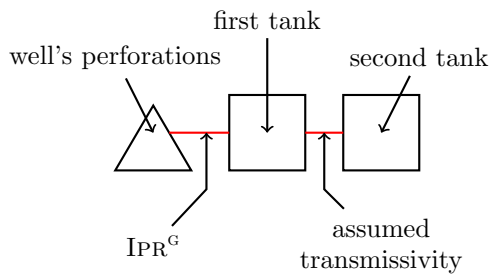

\paragraph{Formulation and state reduction.}
In this first application, we consider a reservoir that contains only gas and water. We
first assume that we only produce some gas, and that no fluids are re-injected in the
reservoir. Moreover, we assume that there is no water production, and thus the amount of
water remains stationary. Therefore, 
$\totalreservoirwater_{\timeindex}=\totalreservoirwater_{0}$ for all
$\timeindex \in \timeset$, the initial amount of water $\totalreservoirwater_{0}$ being
known. We therefore only need to consider the evolution of the amount of gas, the pressure
and the total pore volume as states variables. As shown in
Appendix~\ref{sect:state_reduction}, we can further reduce the state, and we only need to
consider the amount of gas in the reservoir as the reservoir state. Since we do an
optimization at the bottom of the well, we only have one control to consider, the
bottom-hole flowing pressure, noted $\presvar_{\timeindex}$. We therefore have state
$\states_{\timeindex} = \totalreservoirgas_{\timeindex}$ and control
$\controls_{\timeindex} = \presvar_{\timeindex}$.

The optimization problem we consider here is to maximize the revenue of the gas
production. At each time $\timeindex$, we sell gas at price $r_{\timeindex}$, with a
discount factor $\rho$. The general optimization problem~\eqref{eq:general_formulation}
after state and control reduction when considering the gas reservoir and one tank is given
by
\begin{subequations}
  \begin{align}
    \max_{(\totalreservoirgas_{\timeindex}, \presvar_{\timeindex},
    \respressure_{\timeindex}, \gasproduced_{\timeindex})}
    &
      \sum_{\timeindex=0}^{T-1} \rho^{\timeindex} r_{\timeindex} \gasproduced_{\timeindex}
      \label{eq:gas_tank_obj}
    \\
    s.t. ~~ &
      \totalreservoirgas_{0} = \states_{0}
      \eqfinv
    \\
    &
    \respressure_{\timeindex} = \Psi_{1\mathbf{T}}(\totalreservoirgas_{\timeindex})
    \eqsepv
    \forall \timeindex \in \timeset
    \eqfinv
    \label{eq:Psi-mapping}
    \\
    &
    \gasproduced_{\timeindex} = \frac{\InflowPerformanceRelationship\gassuper \left(
    \respressure_{\timeindex} - \presvar_{\timeindex} \right) }
    {\gasformationfactor(\respressure_{\timeindex})}
    \eqsepv
    \forall \timeindex \in \timeset \setminus \na{\horizon}
    \eqfinv \label{eq:IPR_production}
    \\
    &
    \totalreservoirgas_{\timeindex+1} = \totalreservoirgas_{\timeindex} -
    \gasproduced_{\timeindex} 
    \eqsepv
    \forall \timeindex \in \timeset \setminus \na{\horizon}
    \eqfinv
    \label{eq:dynamic_gas_reservoir}
    \\
    &
    \gasproduced_{\timeindex} \geq 0
    \eqsepv
    \forall \timeindex \in \timeset \setminus \na{\horizon}
    \eqfinv \label{eq:positive_production}
    \\
    &
    \totalreservoirgas_{\timeindex} \geq 0
    \eqsepv
    \forall \timeindex \in \timeset
    \eqfinv
    \\
    &
    \presvar_{\timeindex} \geq 0
    \eqsepv
    \forall  \timeindex \in \timeset \setminus \na{\horizon}
    \eqfinv
  \end{align}
  \label{eq:formulation_gas_reservoir_one_tank}
\end{subequations}
as detailed in Appendix~\ref{sect:state_reduction}.

\subsubsection{One tank gas reservoir model}
\label{subsubsect:one_tank_gas}

\paragraph{Fitting model to real data.}
We use production data from a sector of a real gas field, to check that the reservoir
model described with the Constraints~\eqref{eq:Psi-mapping}
and~\eqref{eq:dynamic_gas_reservoir} accurately follows real measurements on the gas field
after fitting the model. More precisely, we apply a given real production schedule on a
part of the field (only one well), and check that the pressure we simulate in the
reservoir is close to the corresponding measured pressure. The historical production spans
over 15 years, and we have monthly values, which is why we consider monthly time steps for
Problem~\eqref{eq:formulation_gas_reservoir_one_tank}.

As can be seen in Figure~\ref{fig:sim_1_res_1_well}, the one tank model fits the
observation. However, there is a gap between the simulated and measured pressures
whose relative value may exceed $10\%$. Since the simulated pressure tends to be higher
on the first half of the
production, we start by underestimating the decline of the production. Then, during the
second half of the production, the simulated pressure is lower than the measured pressure,
which means we overestimate the decline of the production. This elastic effect is most
likely due to the simplification of removing the secondary tank in the model. Indeed, the
secondary tank act as a buffer which reacts slowly, explaining the extra pressure at the
beginning and then sustaining a better value of the pressure later on.

\begin{figure}[htbp!]
  \centering
  \includegraphics[scale=0.5]{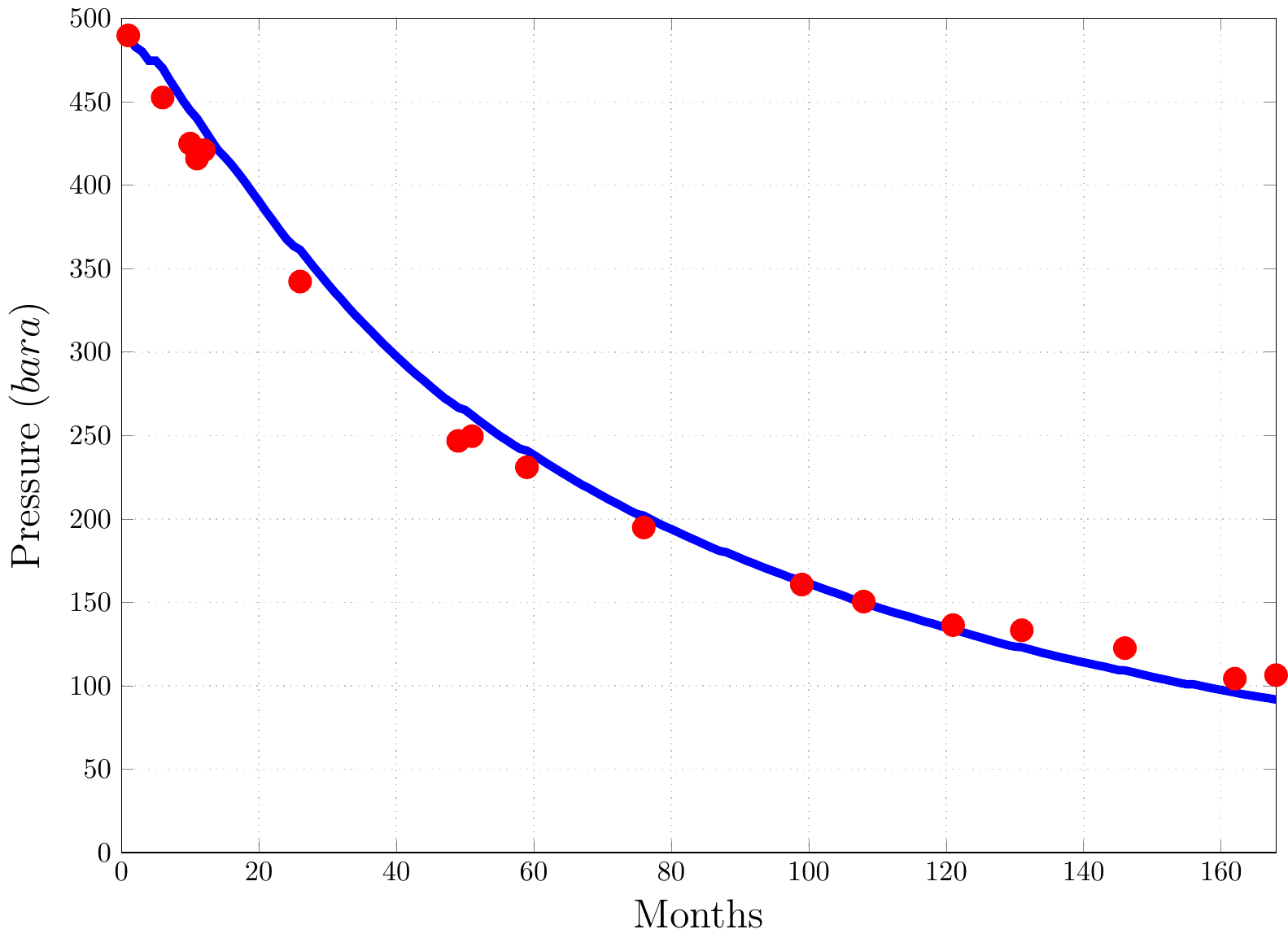} 
  \caption{Comparison of the simulated one tank reservoir pressure to the historical
    measured pressure when applying the same (historical) production schedule. The blue
    curve is the simulated pressure in the tank, whereas the red dots are the measured
    pressures.}
  \label{fig:sim_1_res_1_well}
\end{figure}

\paragraph{Optimization of the production on the one tank approximation.}
We use dynamic programming (see Algorithm~\ref{alg:dynamic_programming}) to get an optimal
production policy. We consider that the revenue per volume of gas is the historical gas
spot price of TTF (Netherlands gas market) from 2006 to 2020, and we do not consider any
operational cost.

We now present the results of the one tank model. The results are illustrated in
Figures~\ref{fig:states_traj_1tank} and~\ref{fig:production_1tank}, and summarized in
Table~\ref{tab:one_tank_summary}. We notably remark in Figure~\ref{fig:production_1tank}
that the optimal production stops when prices are low as we fully take advantage of the
perfect knowledge of the future prices.


There is a massive increase in the total gains when using the optimal policy, compared to
the real production. We also produce far more over the optimization time period
(\numprint[MSm^3]{2850} instead of \numprint[MSm^3]{2250}). However, those results are not
truly comparable. We do not have access to the criteria used to choose the real
production. Optimized and real productions cannot be compared as they do not share the
same objective function. Moreover, since the considered case is a small part of a much
larger production network, we cannot compare the results to the actual production policy
used for fitting the model, which was made with the rest of the network in mind.
Furthermore, our optimization is made at the bottom of the well (BHFP).
We only take into
account the inflow performance of the well, not the vertical lift necessary to bring the
gas to the surface.
The resulting rates are therefore not fully realistic, reaching values
closer to a multi-well development. Finally, the historical production was made without
knowing future prices, and could also have been made with other constraints to ensure a
minimal production of the field, or having a positive cash-flow (constraints due to the
field's exploitation contract). While not directly comparable, this gas reservoir
application still illustrates one of the best-case scenarios of the dynamic programming
approach, and it shows how much could be gained from using a multistage material balance
formulation.

Since the dynamic programming algorithm uses a discretization of the state space $\XX_d$
and the control space $\UU_d$, we tried different uniform discretizations for the states
and controls spaces to prevent any side effects due to the chosen discretization. We do
not observe notable changes in the value function past a \numprint{10000} points uniform
discretization of the state space and a 20 points discretization of the control space,
which are the values we used in this case study. Details on the effect of the
discretization can be found in Appendix~\ref{sect:discretization_impact}.

\begin{figure}[htbp!]
  \centering
  \includegraphics[width=0.45\textwidth]{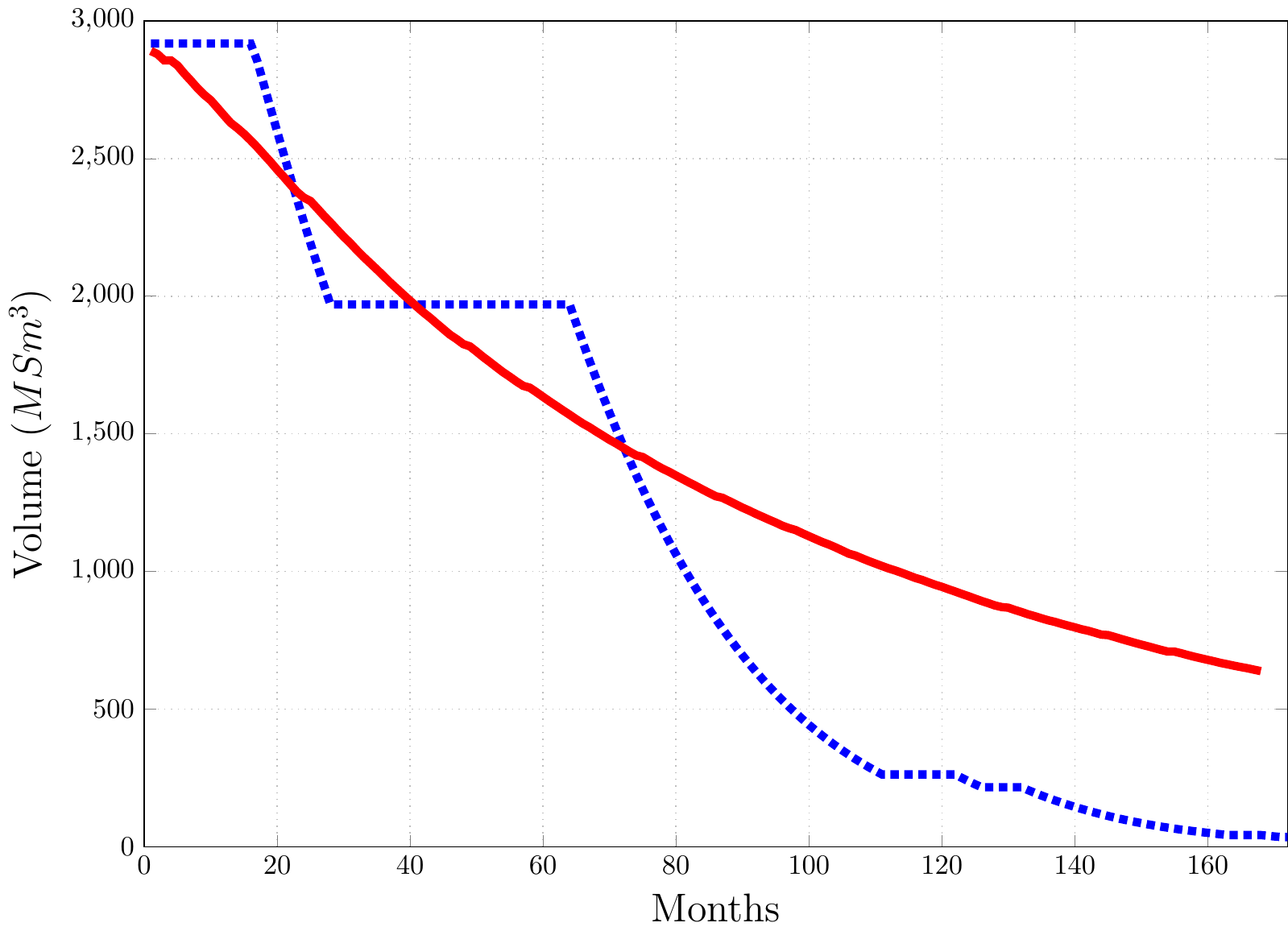}
  \caption{Evolution of the content of the reservoir in the one tank model. The doted blue
    curve is the optimal (anticipative) trajectory of the amount of gas, while the red
    curve is the trajectory with the historical production.}
  \label{fig:states_traj_1tank}
\end{figure}

\begin{figure}[ht!]
  \centering
  \includegraphics[width=0.45\textwidth]{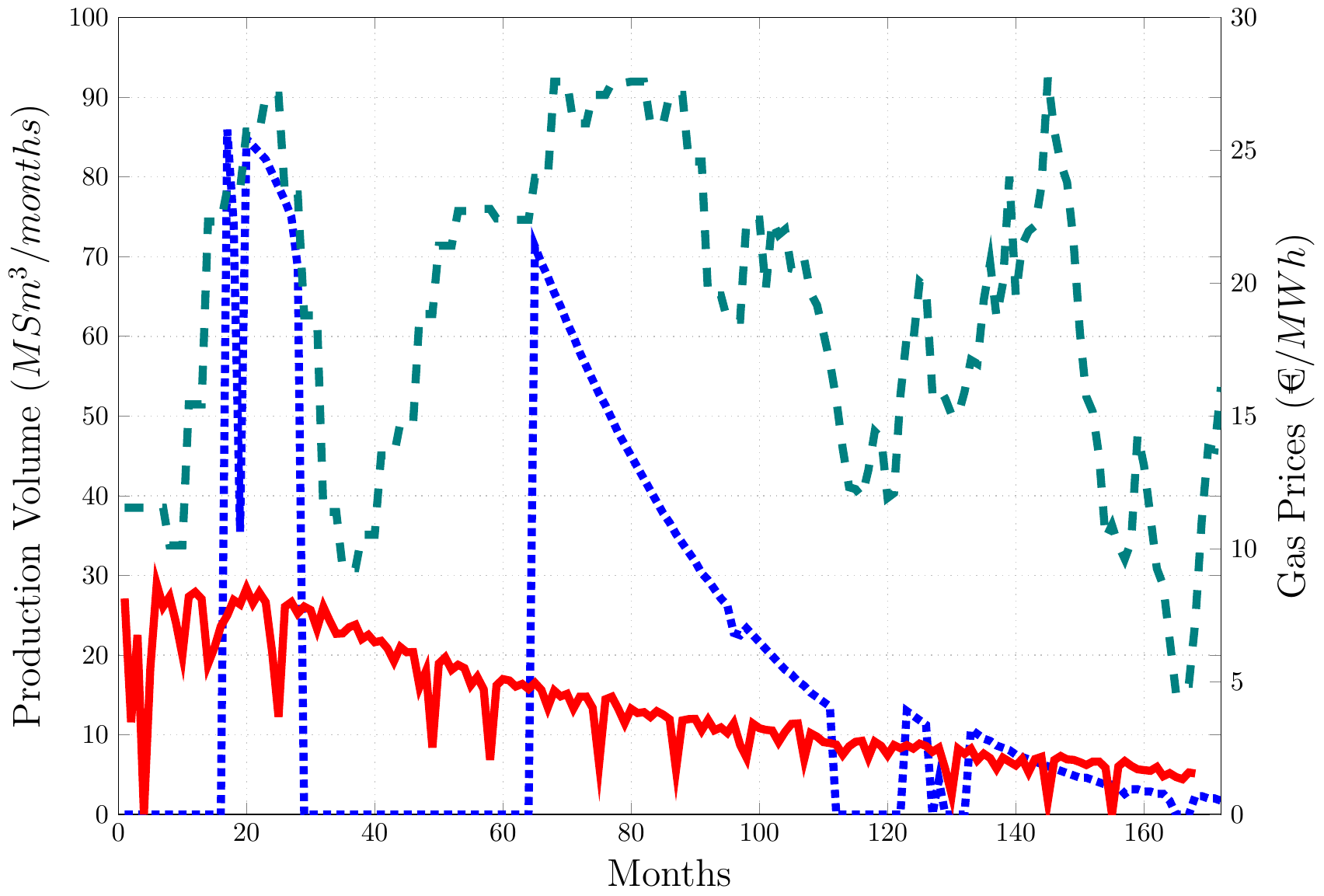}
  \caption{Trajectories of the production. The dotted blue curve is the optimal
    (anticipative) production in the one tank model, the red one is the historical
    production, whereas the dashed green curve is the average monthly gas price.}
  \label{fig:production_1tank}
\end{figure}

\paragraph{Comparison to policy derived from decline curves.}
In this paragraph, we compare the material balance formulation to those using decline
curves or oil-deliverability curves, such as
in~\cite{Grossmann98,GUPTA2012,GUPTA2014,Marmier}. The decline curves formulation and the
way to numerically obtain decline curves are given in
Appendix~\ref{sect:decline_curve_explanation}. The following proposition shows that the
decline curves formulation is equivalent to the material balance formulation when
considering a one-tank model.

\begin{proposition}
  The formulation using decline curves, written
  \begin{subequations}
    \begin{align}
      \max_{\controls} ~ ~
      &
        \sum_{\timeindex=0}^{\horizon}
        \rho^{\timeindex} \costfunct_{\timeindex}(\controls_{\timeindex})
        \label{eq:obj_dc}
      \\
      s.t. ~~
      &
        \oilproduced_{\timeindex} \leq \declinecurve \left( \sum_{s = 0}^{\timeindex-1}
        \oilproduced_{\cumulatedtimeindex} \right)
        \eqsepv \forall \timeindex \in \timeset \setminus \{0\}
        \label{eq:production_dc}\\
      & \controls_{\timeindex} \in \admcontrolset_{\timeindex}\left(
        \sum_{s = 0}^{\timeindex-1} \oilproduced_{\cumulatedtimeindex}
        \right)
        \eqsepv \forall \timeindex \in \timeset \eqfinv
        \label{eq:controls_admissibility_dc}
    \end{align}
    \label{eq:general_formulation_decline_curves}
  \end{subequations}
  is equivalent to the material balance formulation when the state of the reservoir is
  one-dimensional.
  \label{prop:equivalance_dc_mbal_1state}
\end{proposition}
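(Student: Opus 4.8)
The plan is to exploit the special structure created by a one-dimensional state: in the material balance model the single state variable is an affine function of the cumulative production, so every state-dependent quantity can be re-expressed as a function of cumulative production, which is exactly the variable on which the decline curve formulation \eqref{eq:general_formulation_decline_curves} is built. The proof therefore amounts to constructing the decline curve $\declinecurve$ explicitly from the material balance data and checking that the two feasible sets and objectives coincide.

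First I would telescope the dynamics \eqref{eq:dynamic_gas_reservoir} to obtain $\totalreservoirgas_{\timeindex} = \totalreservoirgas_{0} - \sum_{\cumulatedtimeindex=0}^{\timeindex-1}\gasproduced_{\cumulatedtimeindex}$, so that the current state $\totalreservoirgas_{\timeindex}$ and the cumulated production $\sum_{\cumulatedtimeindex=0}^{\timeindex-1}\gasproduced_{\cumulatedtimeindex}$ are in affine bijection. Composing with the pressure mapping $\Psi_{1\mathbf{T}}$ of \eqref{eq:Psi-mapping} shows that the reservoir pressure $\respressure_{\timeindex}$—and, through \eqref{eq:IPR_production}, the production $\gasproduced_{\timeindex}$ and the admissibility set $\admcontrolset_{\timeindex}$, which depends on the state only via this pressure—are all functions of cumulated production alone.

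Second I would characterize, for a fixed state, the set of productions reachable by varying the only control, the bottom-hole flowing pressure $\presvar_{\timeindex}$, over its admissible range. Since $\gasproduced_{\timeindex} = \InflowPerformanceRelationship\gassuper(\respressure_{\timeindex}-\presvar_{\timeindex})/\gasformationfactor(\respressure_{\timeindex})$ is nonnegative, vanishes at zero drawdown $\presvar_{\timeindex}=\respressure_{\timeindex}$, and is increasing in the pressure drop $\respressure_{\timeindex}-\presvar_{\timeindex}$ by monotonicity of the inflow performance relationship, the reachable productions form the interval $[0,\gasproduced_{\max}(\totalreservoirgas_{\timeindex})]$, the upper endpoint being attained at the smallest admissible $\presvar_{\timeindex}$. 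I would then \emph{define} the decline curve by reading this maximal-rate map through the affine change of variable of the first step, i.e.\ $\gasproduced_{\max}(\totalreservoirgas_{\timeindex}) = \declinecurve\big(\sum_{\cumulatedtimeindex=0}^{\timeindex-1}\gasproduced_{\cumulatedtimeindex}\big)$, which is precisely the construction carried out in Appendix~\ref{sect:decline_curve_explanation}.

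With this $\declinecurve$ the two programs become identical term by term. A production trajectory is admissible for the material balance model if and only if $0\le\gasproduced_{\timeindex}\le\declinecurve(\sum_{\cumulatedtimeindex<\timeindex}\gasproduced_{\cumulatedtimeindex})$, which is exactly constraints \eqref{eq:production_dc}--\eqref{eq:controls_admissibility_dc} once the abstract decline-curve control $\controls_{\timeindex}$ is identified with the production level $\gasproduced_{\timeindex}$; under this identification both objectives reduce to the same discounted revenue $\sum_{\timeindex}\rho^{\timeindex}r_{\timeindex}\gasproduced_{\timeindex}$, so the optimal values and optimizers agree. I expect the main obstacle to be precisely this control correspondence: showing that sweeping $\presvar_{\timeindex}$ over its admissible set yields exactly the interval $[0,\declinecurve(\cdot)]$, so that the inequality in \eqref{eq:production_dc} entails no loss of generality relative to the equality-driven material balance dynamics. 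This rests squarely on the monotonicity and zero-drawdown boundary behaviour of the inflow performance relationship, and it is the step that genuinely uses the one-dimensionality of the state, since a higher-dimensional state is not recoverable from the scalar cumulative production.
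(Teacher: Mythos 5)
Your proof is correct and follows essentially the same route as the paper's: telescope the scalar dynamics to put the state in affine bijection with cumulated production, then define $\declinecurve$ as $\max_{\controls}\GeneralProductionFunction\gassuper\bp{\states_0-\sum_{\cumulatedtimeindex<\timeindex}\gasproduced_{\cumulatedtimeindex},\controls}$ so that the two feasible sets and objectives coincide. If anything you are slightly more careful than the paper, which only writes the inequality $\gasproduced_{\timeindex}\leq\declinecurve(\cdot)$ and asserts equivalence, whereas you explicitly verify the converse inclusion --- that sweeping the bottom-hole pressure over its admissible range realizes the whole interval $[0,\declinecurve(\cdot)]$ --- using the monotonicity and zero-drawdown behaviour of the inflow performance relationship.
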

\begin{proof}
  See Appendix~\ref{sect:decline_curve_explanation}
\end{proof}

We obtain the decline curve $\declinecurve$ used in Inequality~\eqref{eq:production_dc} by
first computing the maximal production value for the same discrete states as the ones used
in the dynamic programming approach. Then, piecewise interpolation between the computed
values is used to obtain the value of the decline curve everywhere. It is worth noting
that, when using piecewise linear approximation for the decline curves, the maximization
problem~\eqref{eq:general_formulation_decline_curves} turns out to be a MIP (Mixed Integer
Problem) with linear constraints and with more than \numprint{170000} binary variables. We
solve that MIP by using the commercial solver Gurobi 9.1. The results are given in
Table~\ref{tab:one_tank_summary}. Since the material balance
formulation~\eqref{eq:formulation_gas_reservoir_one_tank} uses a one-dimensional state, we
obtain similar results between the material balance formulation and the formulation using
a decline curve in accordance with Proposition~\ref{prop:equivalance_dc_mbal_1state}. The
two approaches thus yield similar production policies. Note however that the dynamic
programming approach has a lower computation time than a naive implementation of the
decline curve formulation. One could decrease the precision on the decline curve
formulation, by using fewer points to describe the decline curve. This would improve its
computation time.
As this is not the focus of this paper, we did not do such refinement of the
numerical experiments for the decline curve formulation.

\begin{table}[htbp!]
  \begin{center}
    \begin{tabular}[bct]{crr}
      \toprule
                            &  CPU time (s)  & Value (M\euro) \\
      \midrule
      Material Balance      &     \numprint{653}        &   743          \\
      Decline Curves        &     \numprint{3882}       &   743          \\
      \bottomrule
    \end{tabular}
    \caption{Comparison with regards to CPU time and value between the material balance
      and decline curve formulation for one tank\label{tab:one_tank_summary}}
  \end{center}
\end{table}

\subsubsection{Two tanks gas reservoir model}

\label{subsubsect:two_tanks_gas}
\paragraph{Fitting data.}
We check if the fitted two tanks reservoir model accurately follows real measurement on
the gas field. We use the same data as in the one tank case. The two tanks model more
accurately fits the observations, as is depicted in Figure~\ref{fig:sim_2_res_1_well_2}
(we have a gap of less than $5\%$ for each measured point). Since the two tanks model is
closer to the observations, we consider that it is the reference of ``truth'' when
comparing results of the one tank approximation and the two tanks model.

\begin{figure}[htbp!]
  \begin{center}
    \includegraphics[scale=0.5]{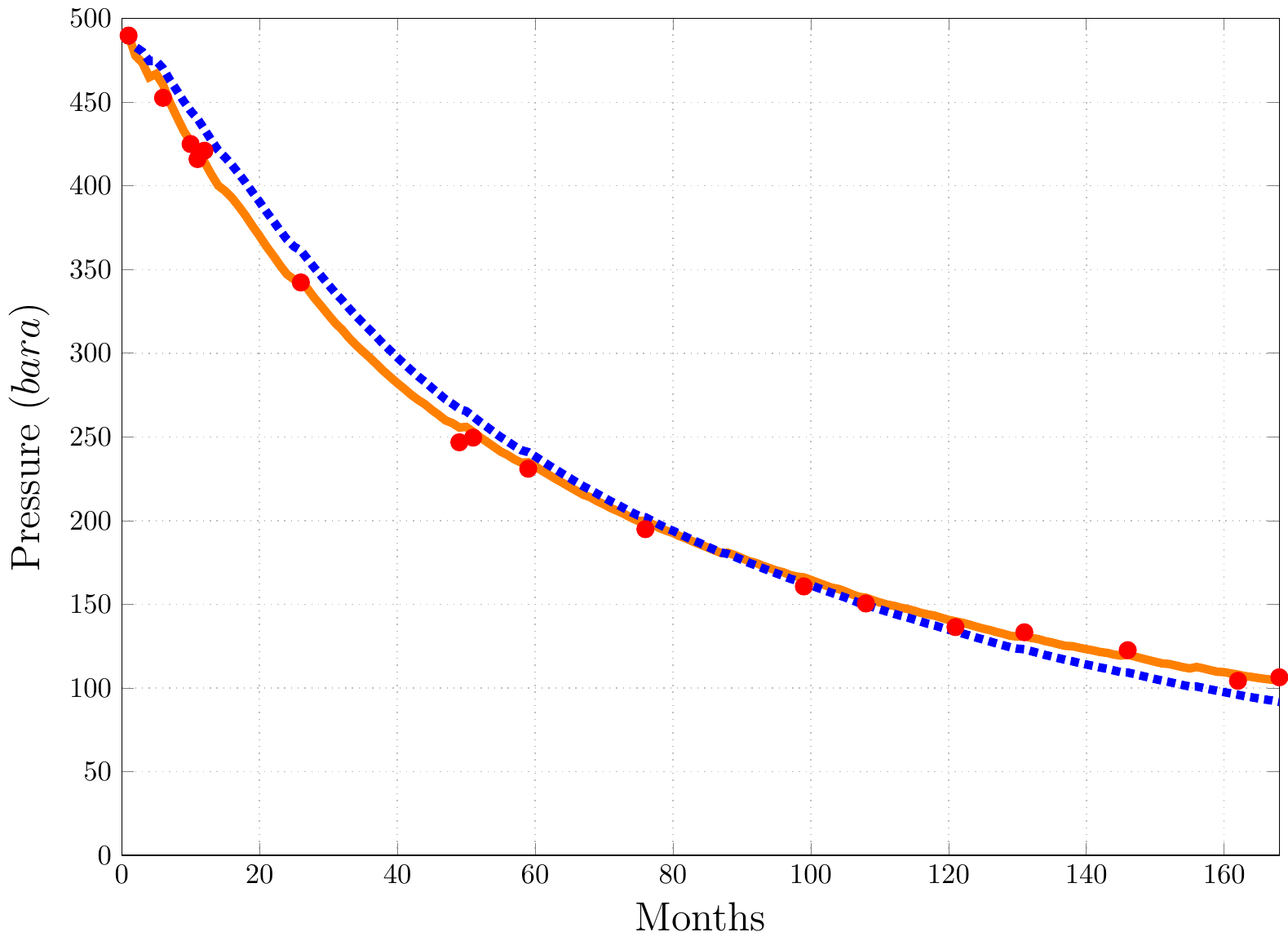} 
  \end{center}
  \caption{Comparison of the simulated two tanks reservoir pressure to the measured
    pressure when applying the same production schedule. The blue dotted curve recalls the
    pressure obtained using the one tank model. The orange continuous curve is the
    pressure in the first tank obtained using the two tanks model. The red dots are the
    measured pressure at the bottom of the well.}
  \label{fig:sim_2_res_1_well_2}
\end{figure}

\paragraph{Optimal production with two tanks.}

We now present the results of the two tanks model. The only changes compared to the one
tank model are on the states and on the dynamics of the reservoir. We use the same prices,
and, again, we only do an optimization at the bottom of the well (BHFP). Details on the
obtained optimal controls and states trajectory are given in
Figure~\ref{fig:states_traj_2tanks} and Figure~\ref{fig:production_2tanks}. Once again, we
observe that production stops when prices are low, benefiting fully from anticipating the
future prices. We also note that more ``pauses'' are present in the productions when
compared to the one tank model (four instead of three). The ``pauses'' allow the second
tank to replenish the first one (see Figure~\ref{fig:states_traj_2tanks}). Indeed,
production resumes at months 50 to 60, before stopping again for five months. We can then
observe that the amount of gas in the first tank is replenished, before we resume
production at month 65, at the same date as in the one tank model. We end up producing
some more gas than with the one tank model ($\numprint[Sm^3]{2900}$ instead of
$\numprint[Sm^3]{2850}$).

\begin{figure}[htbp!]
  \centering
  \includegraphics[width=0.45\textwidth]{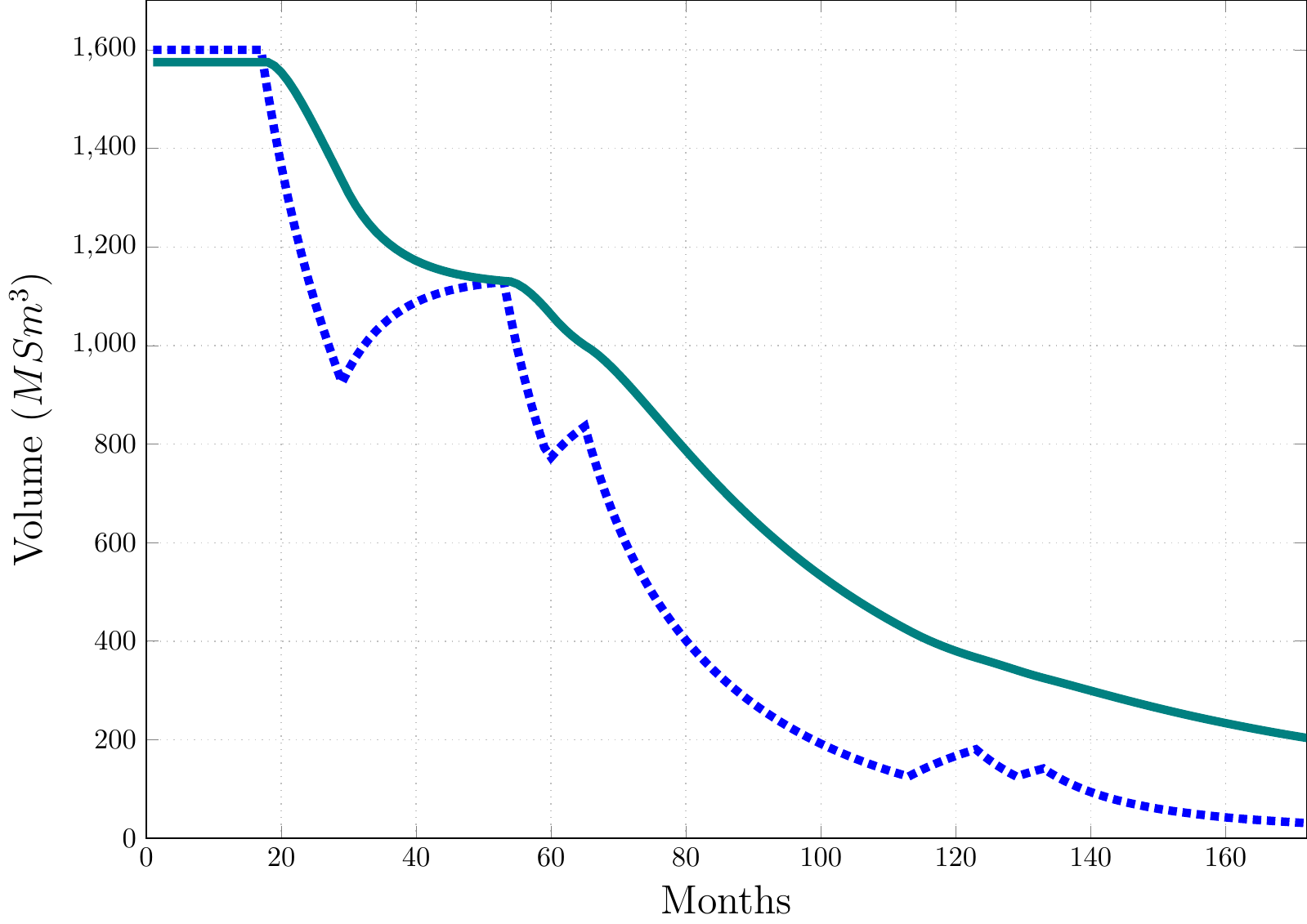}
  \caption{Evolution of the content of the reservoirs when applying the optimal
    (anticipative) policy in the two tanks model. The dotted blue curve shows the content
    of the first tank (linked to the well) while the green curve shows the content of the
    second tank.}
  \label{fig:states_traj_2tanks}
\end{figure}

\begin{figure}[htbp!]
  \centering
  \includegraphics[width=0.45\textwidth]{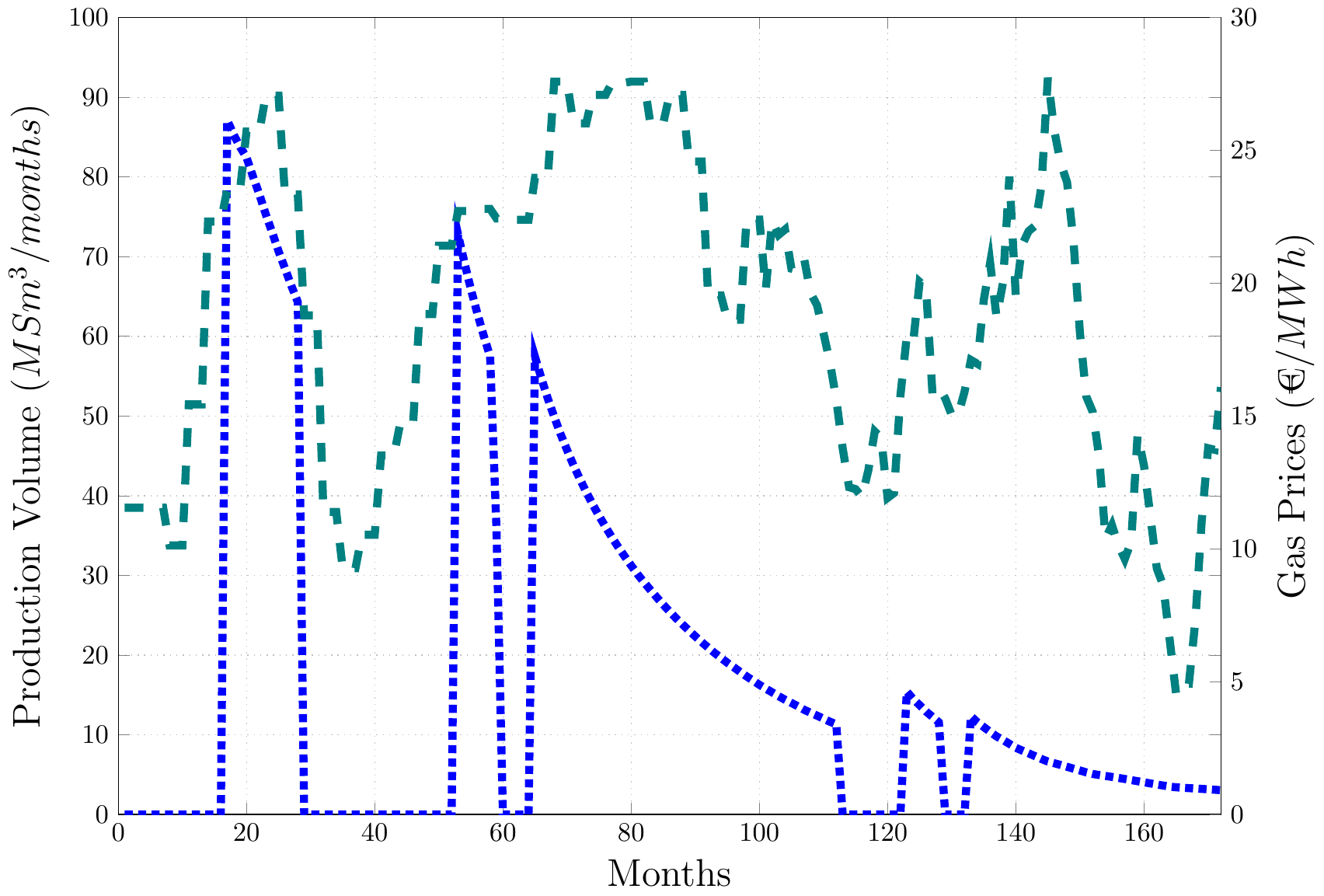}
  \caption{Trajectory of the optimal production in the two tanks model. The dotted blue
    curve is the optimal (anticipative) production, whereas the dashed green curve is the monthly gas
    price.}
  \label{fig:production_2tanks}
\end{figure}

We tried different discretizations for the state space. Notably, using more than $400$
possible states per tank and $10$ possible controls did not yield any significant
improvement in the computed value function. Details on the impact of the discretization
are given in Appendix~\ref{sect:discretization_impact}.

Numerical experiments also reveal that the initial value function $\bellval_0$ is almost
an affine function of the sum of the states. This seems to imply that the one tank and two
tanks model should yield similar results. Such a statement does not hold true, as
confirmed by the numerical experiments described in the next paragraph.

\paragraph{Comparing the one tank formulation to the two tanks formulation.}
To compare the results between the two tanks and one tank formulations, we consider that
the two tanks material balance model is the reference. A given sequence of controls
$\np{\controls_{\timeindex}}_{\timeindex \in \timeset\setminus\na{\horizon}}$ admissible for the one
tank model is not necessarily admissible for the two tanks model. Indeed, the admissible
control set is given by
$\admcontrolset(\states_{\timeindex}) = [0, \Psi_{1\mathbf{T}}(\states_{\timeindex})]$ for
the one tank model and by
$[0, \Psi_{2\mathbf{T}}^{(1)}(\states^{(1)}_{\timeindex})]$ for the two tanks model (see Appendix~\ref{subsect:state_reduction_gas}).

Thus, given a sequence of controls
$\np{\controls_{\timeindex}}_{\timeindex \in \timeset\setminus\na{\horizon}}$ admissible
for the one tank model, we produce an admissible sequence of controls for the two tanks
model with the use of a \emph{projection}
$\Pi_{1\mathbf{T} \to 2\mathbf{T}}: \UU^{\horizon} \times \XX \to \UU^{\horizon}$ given as
follows. The sequence
$\np{\widetilde{\controls}_{\timeindex}}_{\timeindex \in
  \timeset\setminus\na{\horizon}}=\Pi_{1\mathbf{T} \to 2\mathbf{T}}
\bp{\np{\controls_{\timeindex}}_{\timeindex \in \timeset\setminus\na{\horizon}},
  \states_0}$ is computed recursively for all 
$\timeindex \in \timeset\setminus\na{\horizon}$ by 
$
\widetilde{\controls}_{\timeindex} = \min\ba{\controls_{\timeindex},
  \Psi_{2\mathbf{T}}^{(1)}(\widetilde{\states}^{(1)}_{\timeindex})}
$, where $\widetilde{\state}_{\timeindex}$ is defined at time $0$ by
$\widetilde{\states}_{0} = \states_0$, and for all $\timeindex > 0$ by
\( \widetilde{\states}_{\timeindex+1} =
\dynamics_{2\mathbf{T}}(\widetilde{\states}_{\timeindex},
\widetilde{\controls}_{\timeindex}) \). We can get a sequence of admissible controls for
the two tanks model by applying the projection $\Pi_{1\mathbf{T} \to 2\mathbf{T}}$ on a
sequence of admissible controls for the one tank model.


To compare the one tank and two tank models, we project the optimal sequence of controls
returned by the dynamic programming algorithm on the one tank formulation thanks to the
projection $\Pi_{1\mathbf{T} \to 2\mathbf{T}}$. As can be seen in
Figure~\ref{fig:production_1tank_corrected}, the projected sequence of controls differs
from the non-projected sequence: the dotted curve, which represents the projected
sequence, is below the dashed curve, which represents the optimal sequence for the one
tank model.

As depicted in Figures~\ref{fig:production_1tank_corrected}
and~\ref{fig:gains_1tank_corrected}, the production planning given by the one tank
optimization problem differs from the production planning given by the two tanks
optimization problem. Moreover, the production planning of the one tank model gives lower
gains than anticipated, and is worse than the optimal two tanks model planning. The one
tank optimization is thus optimistic on the optimal value of the problem when applied with
the reference model. Moreover, there is a $5\%$ difference in value between the one tank
and two tanks models (a value of \numprint[M\text{\euro}]{703} for the translated one tank
production planning against \numprint[M\text{\euro}]{736} for the two tanks production
planning). This discrepancy illustrates how having a more accurate model of the reservoir
can have a substantial impact on the optimal planning, all other things being equal. It
also shows that, contrarily to the assumption presented at the end of the previous
paragraph (that the two models could yield similar results if the value function only
depended on the sum of the states), the optimal value and control cannot be found with a
one tank approximation, and the optimal controls and value functions are not functions of
the sum of the states.

\begin{figure}[ht!]
  \centering
  \includegraphics[width=0.45\textwidth]{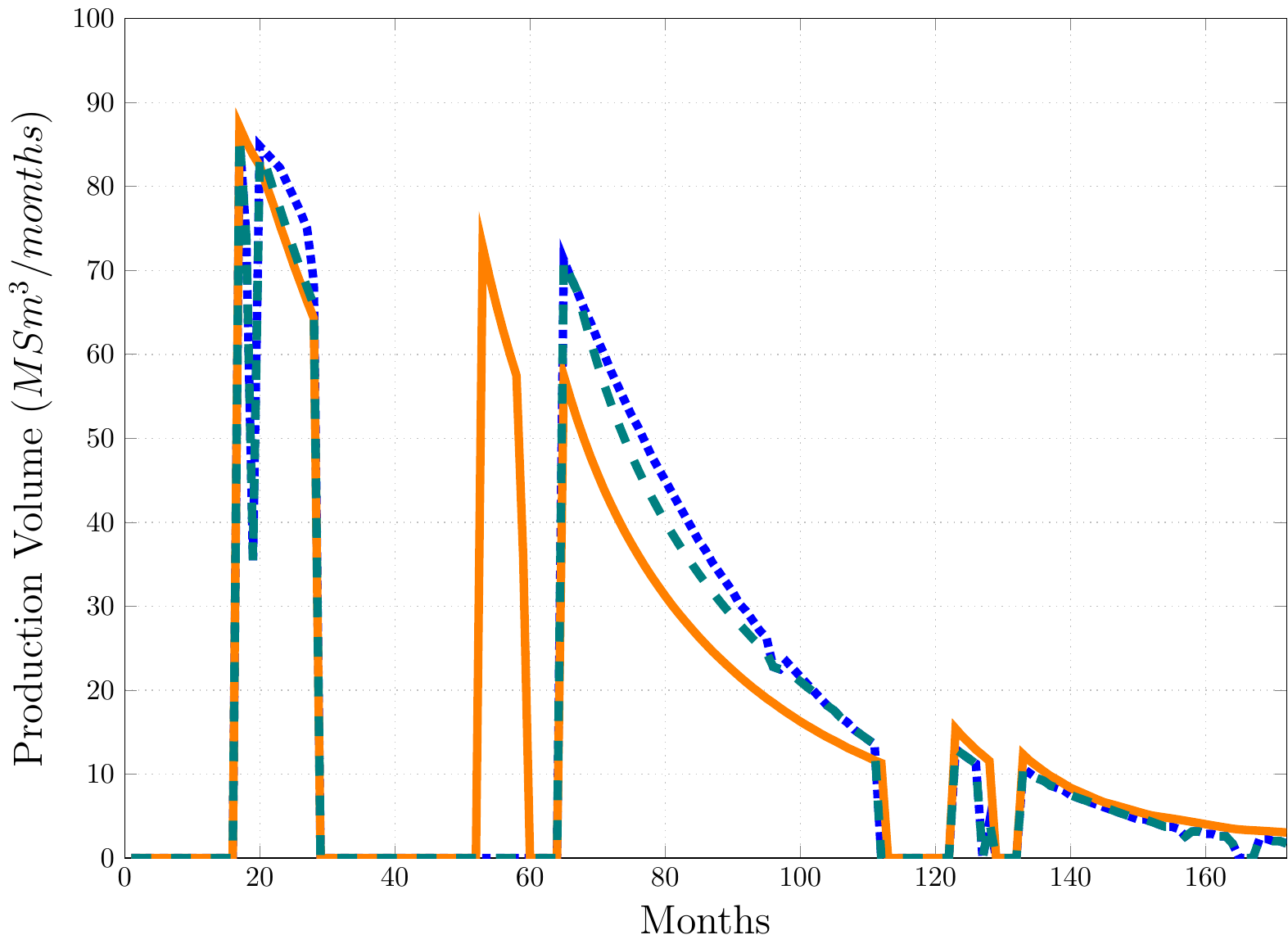}
  \caption{Comparison of the trajectory of the production with the two tanks model as
    reference. The dotted blue curve is the production planning in the one tank model,
    the orange curve is for the two tanks model. The dashed green curve is the
    production planning of the one tank model projected in the two tanks model}
  \label{fig:production_1tank_corrected}
\end{figure}

\begin{figure}
  \centering
  \includegraphics[width=0.45\textwidth]{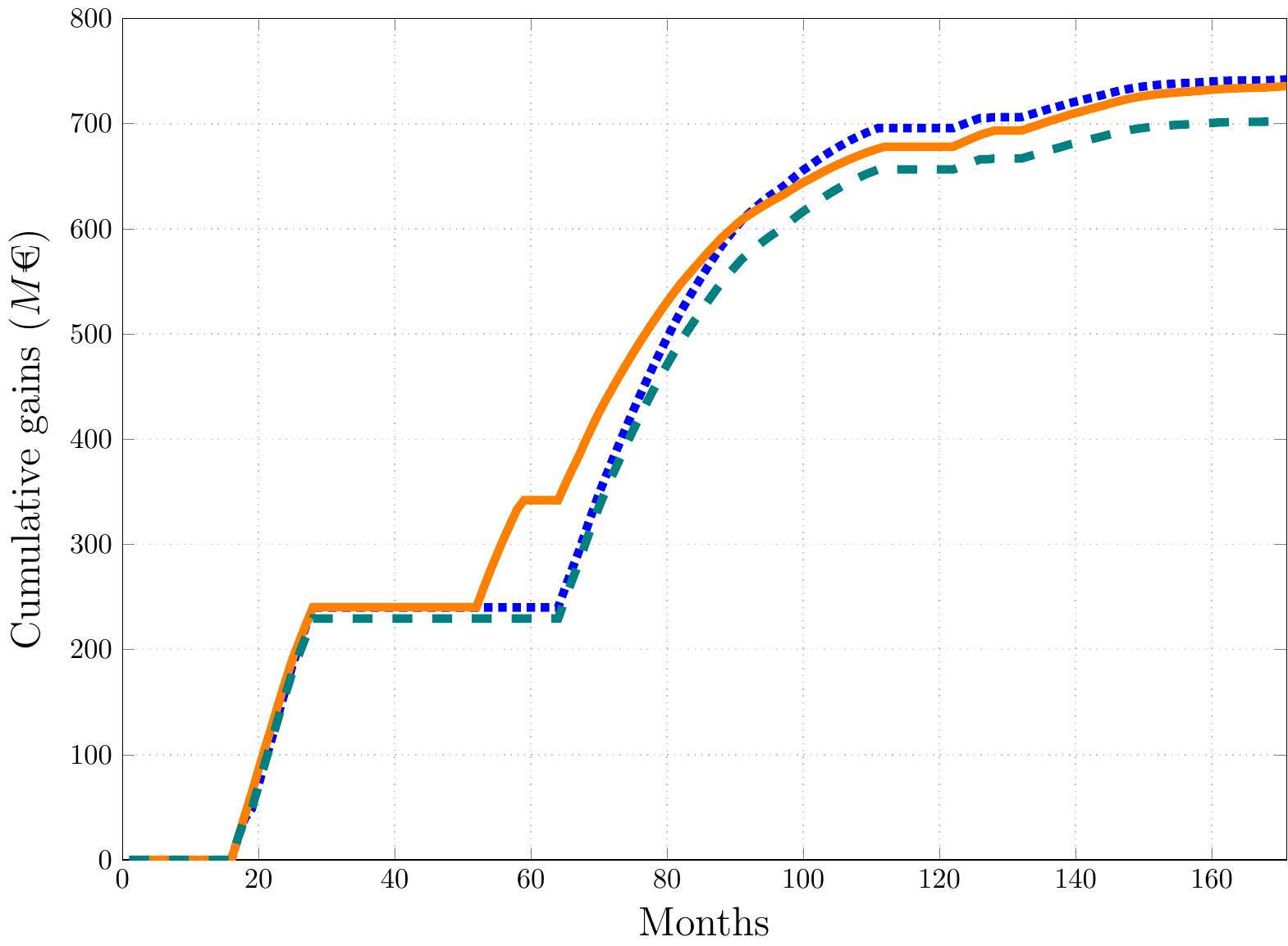}
  \caption{Cumulated gains with the two tanks model as reference. The dotted blue curve
    is the cumulated gains of the one tank planning in the one tank model, the orange
    curve is the cumulated gains of the two tanks planning in the two tanks model, and the
    dashed green curve is the cumulated of the one tank planning projected in the two
    tanks model}
  \label{fig:gains_1tank_corrected}
\end{figure}

\paragraph{Comparison to decline curves with two tanks.}
We have numerically compared the decline curve and the material balance formulations in a
context where they are known to be equivalent, that is, the one tank formulation. We now
produce numerical experiments in a context where the equivalence is not assured: two tanks
connected with a known transmissibility. We have generated decline curves for the two
tanks formulation by following the procedure described in
Appendix~\ref{sect:decline_curve_explanation}.
The
results returned by the decline curve formulation provide an admissible production in the
two tanks model, as it is constrained by an admissible production schedule. We can
therefore directly compare the results obtained by the decline curves approach and the two
tanks model. The results of the
optimization of the two formulations are compiled in
Table~\ref{tab:DC_to_DP_comparison_two_tanks}. We end up having close results, with a
difference in optimal values of $0.7\%$, but with a large difference in computing times.
However, it appeared that such close results were due to the selected price scenario.
Using different prices by randomizing the order in which the different prices appear, the
gap between the two approaches widens from $0.5\%$ up to $4\%$. This implies that the
initial price considered was an almost best-case scenario for the decline curves approach.
It also shows that the decline curves approach is far less robust to changes in the price
data, and that it cannot benefit as efficiently as the material balance formulation of
some effects of the two tanks dynamical system, such as waiting for the second tank to
empty itself into the first one.

\begin{table}[htbp!]
  \begin{center}
    \begin{tabular}[bct]{ccc}
      \toprule
                            &  CPU time (s)  & Value (M\euro) \\
      \midrule
      Material Balance      &     \numprint{706}        &   \numprint{736}          \\
      Decline Curves        &     \numprint{7825}       &   \numprint{731}          \\
      \bottomrule
    \end{tabular}
  \end{center}
  \caption{Comparison with regards to CPU time and value between the material balance
    and decline curve formulation for two tanks with the initial prices sequence.
    \label{tab:DC_to_DP_comparison_two_tanks}}
\end{table}

Overall, this application suggests that the material balance approach can work on complex
cases, and that dynamic programming is well suited to optimize an oil field. Moreover,
there can be differences with results from the decline curves approach, which are likely
to grow larger with the complexity of the system.

\subsection{An oil reservoir with water injection}
\label{subsect:oil_wi_numerical_case}
The second application is an oil reservoir with water injection. The goal is to
demonstrate how the formulation can be used beyond primary recovery cases, on a
numerically simple case. We consider that we have one reservoir which contains both oil
and water, produced under pressure maintenance by water injection. Moreover, we consider
that the initial pressure is above the bubble-point, which eliminates the possibility of
having free gas in the reservoir. This allows us to have once again a one-dimensional
state: either the water (which we used for the numerical applications), or the oil in the
reservoir. We have $\states_{\timeindex} = \totalreservoirwater_{\timeindex}$ and
$\controls=\presvar_{\timeindex}$. Here, we want to maximize the revenue of the oil
production. The optimization problem~\eqref{eq:general_formulation} now becomes
\begin{subequations}
  \begin{align}
    \max_{(\totalreservoirwater_{\timeindex},
    \presvar_{\timeindex}, \watercut_{\timeindex})} ~~
    &
      \sum_{\timeindex=0}^{\horizon-1}
      \bigg( \rho^{\timeindex}
      r_{\timeindex} \alpha \frac{\respressure - \presvar_{\timeindex}}
      {\oilformationfactor(\respressure)}
      \np{1-\watercut_{\timeindex}} \nonumber \\
    &
      \qquad
      - \rho^{\timeindex} c_{\timeindex} \alpha \frac{\respressure -
      \presvar_{\timeindex}} {\waterformationfactor(\respressure)}
      \bigg)
    \label{eq:obj_water_injection}\\
    s.t. ~~
    & \watercut_{\timeindex} = \watercutfunct\left(\frac{\totalreservoirwater_{\timeindex}
      \waterformationfactor(\respressure)}{\porevolume}\right)
      \eqsepv \forall \timeindex \in \timeset
      \eqfinv\label{eq:wct_water_injection}\\
    & \totalreservoirwater_{\timeindex+1} = \totalreservoirwater_{\timeindex} -
      \alpha \frac{\respressure - \presvar_{\timeindex}}
      {\waterformationfactor(\respressure)}\np{\watercut_{\timeindex}-1}
      \eqsepv \forall \timeindex \in \timeset
      \eqfinv\label{eq:dynamics_water_injection}\\
    & \waterproduced_{min} \leq
      \alpha \frac{\respressure - \presvar_{\timeindex}}
      {\waterformationfactor(\respressure)}\np{\watercut_{\timeindex} - 1}
      \leq \waterproduced_{max} \eqsepv \forall  \timeindex \in \timeset
      \eqfinv\label{eq:water_production_bound_water_injection}\\
    & \oilproduced_{min} \leq
      \alpha \frac{\respressure - \presvar_{\timeindex}}
      {\oilformationfactor(\respressure)}\np{1-\watercut_{\timeindex}}
      \leq \oilproduced_{max} \eqsepv \forall \timeindex \in \timeset
      \eqfinv\label{eq:oil_production_bound_water_injection}\\
    & \presvar_{\timeindex} \geq 0 \eqsepv \forall \timeindex \in \timeset
      \eqfinp\label{eq:positive_pressure}
  \end{align}
  \label{eq:formulation_water_injection}
\end{subequations}

The objective function (Equation~\eqref{eq:obj_water_injection}) is divided in two
components. At time $\timeindex$, we consider a discount factor $\rho$ and the
price $r_{\timeindex}$ of the oil, whereas injecting water costs $c_{\timeindex}$ per
cubic meter. The revenue is hence
\[
  \sum_{\timeindex=0}^{\horizon-1}
  \rho^{\timeindex}
  \Big( 
  r_{\timeindex} \oilproduced_{\timeindex} -
  c_{\timeindex} \waterinjected_{\timeindex}
  \Big)
  \eqfinp
\]
Replacing the produced oil $\oilproduced_{\timeindex}$ and the injected water
$\waterinjected_{\timeindex}$ by the relevant functions of the controls (see
Equations~\eqref{eq:def_Fo_WI}
and~\eqref{eq:def_Fwi_function}) leads to the objective
function~\eqref{eq:obj_water_injection}).

We assume that the water-cut function $\watercutfunct$ (the amount of water produced when
extracting one cubic meter of liquid at standard conditions) is given by a piecewise
linear function. The water-cut depends on the water saturation $\watersaturation$
(proportion of water in the reservoir pore volume). Since the reservoir pressure is kept
constant, the total pore volume is constant and the water saturation expression is thus
$\watersaturation_{\timeindex} = \frac{\totalreservoirwater_{\timeindex}
  \waterformationfactor(\respressure)}{\porevolume}$. This gives us
constraint~\eqref{eq:wct_water_injection}.

Since we want to keep a constant pressure in the reservoir, we need to re-inject enough
water to replace the extracted oil. Replacing the oil with water gives a new dynamic for
$\totalreservoirwater_{\timeindex}$ as in Equation~\eqref{eq:dynamics_water_injection}.
Constraints~\eqref{eq:water_production_bound_water_injection}
and~\eqref{eq:oil_production_bound_water_injection} details the oil and water produced
depending on the control~$\presvar_{\timeindex}$ with their respective bounds. The details
of the formulation are given in Appendix~\ref{sect:state_reduction}.

We do a monthly optimization, with the historical Brent prices for years 2000--2020 as the
prices in the objective function~\eqref{eq:obj_water_injection}, and a water injection
cost of \numprint[\text{\euro}/m^{3}]{1}. Details on the resulting trajectory of the
content of the reservoir can be found in Figure~\ref{fig:states_traj_oil_wi}, whereas
details on the production can be found in Figure~\ref{fig:production_oil_wi}. As
previously discussed in \S\ref{subsect:2tanks_numerical_case}, the optimal policy yields
more production when prices are high, and stops producing when they are low. The
production goes from one bound to the other (zero production, with
$\presvar_{\timeindex} = \respressure$, and full production, with
$\presvar_{\timeindex} = 0$).

The production also does not fully deplete the reservoir, which means that it is not
advantageous to completely deplete the reservoir if one wants to maximize the profit over
the optimization time frame (there is still \numprint[MSm^{3}]{18.2} of oil in the
reservoir at time $\horizon$, as can be seen in Figure~\ref{fig:states_traj_oil_wi}).
Indeed, production slowly diminishes with the volume of oil
$\totalreservoiroil_{\timeindex}$ in the reservoir, as can be seen in
Figure~\ref{fig:production_oil_wi}. It is more advantageous to wait for high prices
instead of producing, as it would reduce the possible future production. This leads to
halting production with some reserves still in the reservoir, as we prefer to wait for a
higher price instead of producing when prices are low. As a side effect, numerical
experiments reveal that the initial value function $\bellval_0$ is almost linear with
regards to the state $\states_0$. However, we only considered simple constraints on the
production. As more constraints will be added to the problem, other behaviors will
certainly appear. CPU time was \numprint[s]{1575} for a $\numprint{100000}$ discretization
of the state variable, with a value of \numprint[M\text{\euro}]{3376}. Impact of the
discretization can be found in Appendix~\ref{sect:discretization_impact}.

\begin{figure}[htbp!]
  \centering
  \includegraphics[width=0.45\textwidth]{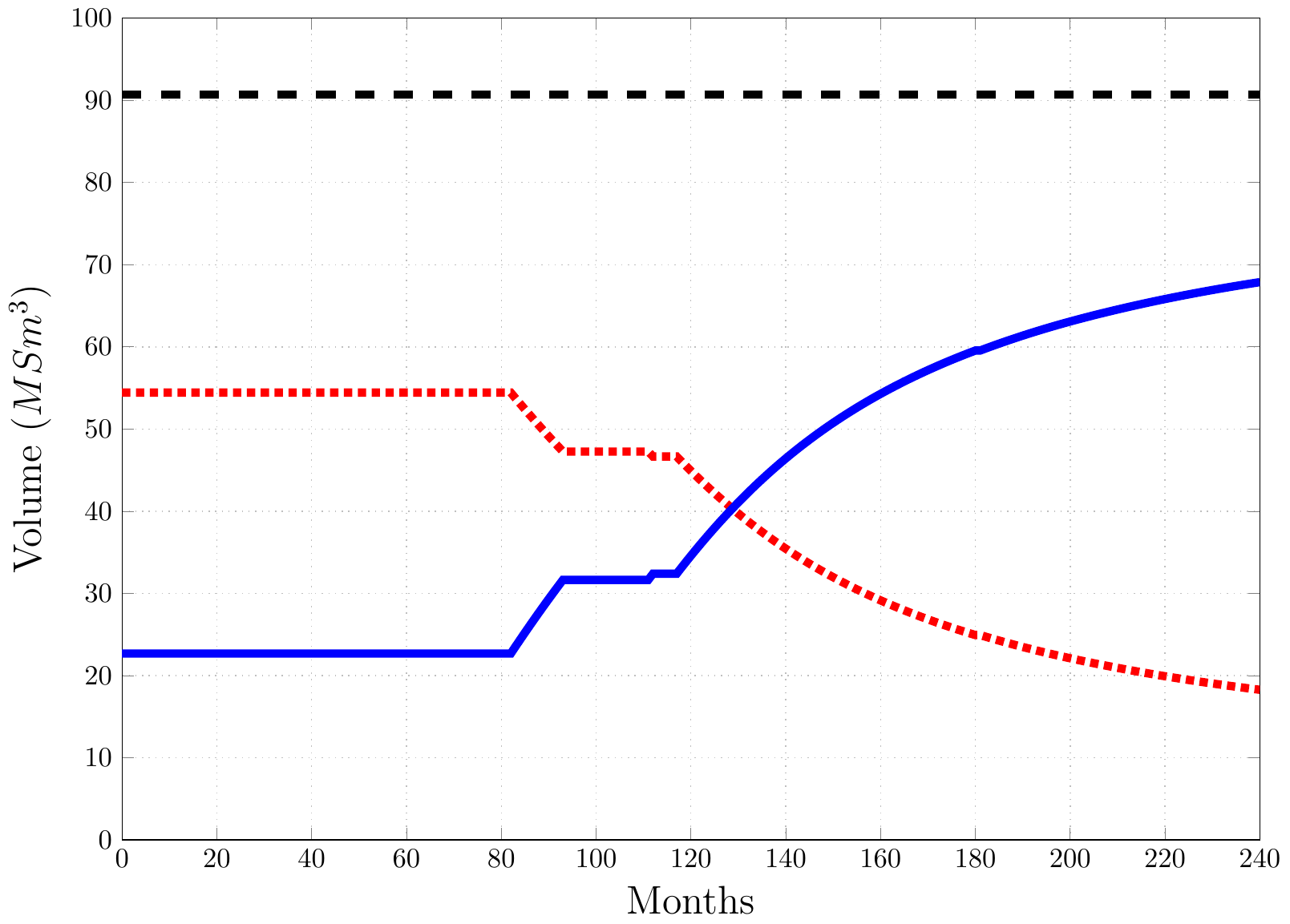}
  \caption{Evolution of the content of the reservoir when applying the optimal policy in
    the oil reservoir model. The blue curve shows the volume of water in the reservoir,
    whereas the dotted red curve is the volume of oil the reservoir. The dashed black
    curve represents the total pore volume}
  \label{fig:states_traj_oil_wi}
\end{figure}

\begin{figure}[htbp!]
  \centering
  \includegraphics[width=0.45\textwidth]{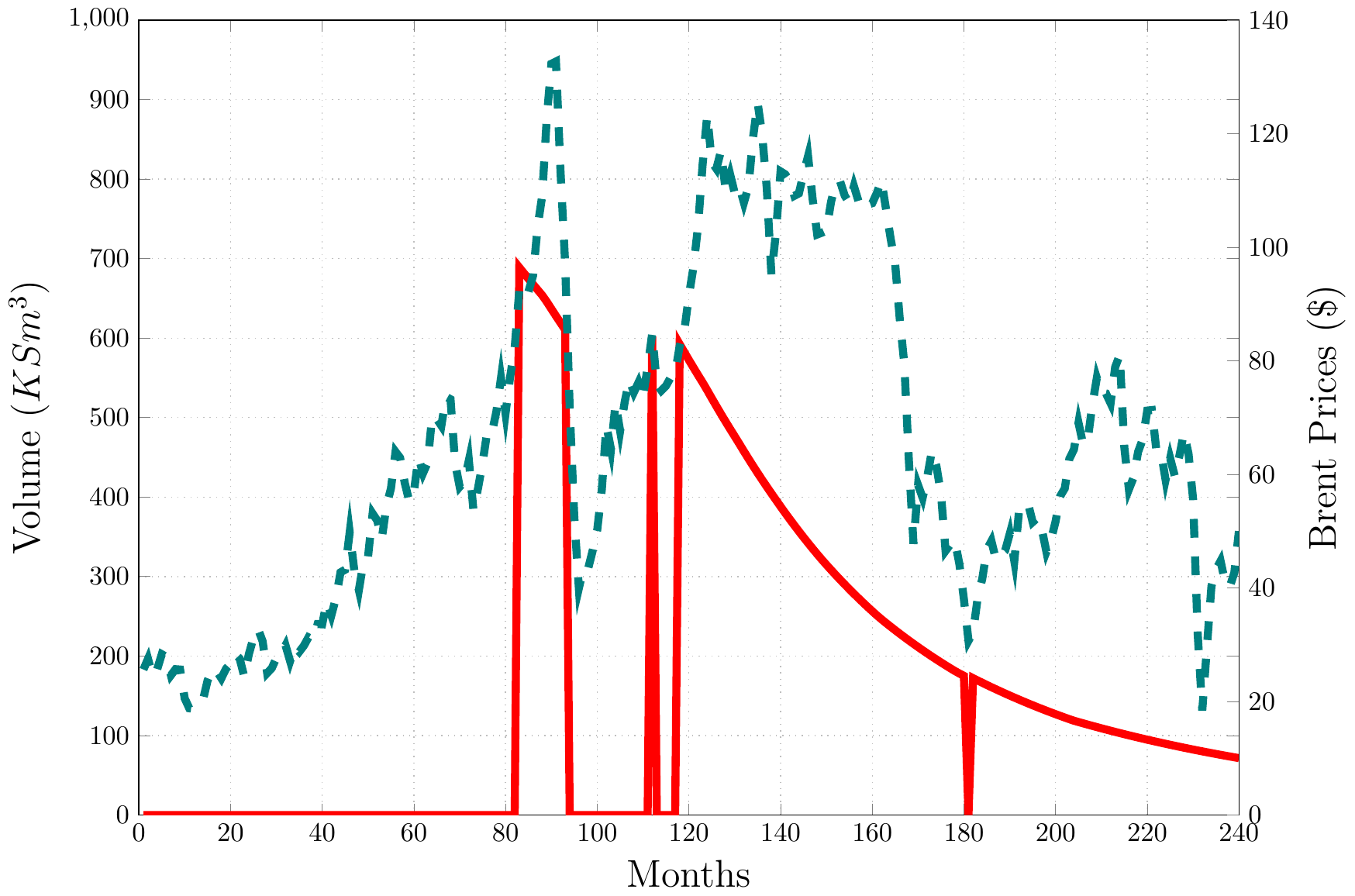}
  \caption{Trajectory of the optimal production in the oil reservoir model. The red curve
    is the optimal production, whereas the dashed green curve is the monthly oil price}
  \label{fig:production_oil_wi}
\end{figure}

Overall, this application shows how we can apply the material balance approach beyond first
recovery of oil and gas, and that it can be used on different kinds of reservoirs.

\section{Conclusion}
\label{sect:conclusion}
  In this paper, we have presented a mathematical formulation for the optimal
  management over time of an oil production network as a multistage optimization problem.
  In this formulation, the reservoir is modeled as a controlled (non-linear) dynamical
  system derived from material balance equations and the black-oil model. The state of the
  derived dynamic system is of dimension five, which is quite large for numerical
  resolution via dynamic programming algorithm. However, we were able to use Dynamic
  Programming to numerically solve the management optimization problem for specific cases
  of interest with either oil or gas, both presenting a reduced dimensionality of the
  state. We have also shown that our mathematical formulation is an improvement over
  decline curves formulation. First, as predicted by the theory, we replicated results
  from decline curve formulations when considering the first recovery of a one tank system
  (as seen in \S\ref{subsubsect:one_tank_gas}). Second, in more complex cases with
  inter-connected tanks, as described in \S\ref{subsubsect:two_tanks_gas}, we have shown
  that we can surpass the NPV returned by the decline curve formulation. Third, we have
  gone beyond the first recovery of hydrocarbons, as we have shown in
  \S\ref{subsect:oil_wi_numerical_case}, where we took into account water injection.
  Finally, it is to be noted that the dynamic programming algorithm can be used in a
  stochastic framework. As an example, we could add uncertainties to the oil and gas
  prices, instead of assuming that they are known in advance and thus deterministic.
  Moreover, an even more realistic formulation with \emph{partial observation} of the
  content of the reservoir could also be explored. Indeed, in oil production systems, the
  initial state of the reservoir is not known. Such a formulation is amenable to dynamic
  programming, as will be explored in future works. 

\section*{Acknowledgements} We would like to thank TEPNL in general and Erik Hornstra in
particular for providing data used in this paper.

\appendix
\section{Detailed construction of the reservoir as a dynamical system}
\label{sect:Formulation_details}
In this section, we detail the construction of the reservoir as a dynamical system. This
serves as the proof of Proposition~\ref{prop:dynamic_formulation}.

\subsection{Constitutive equations assuming the black-oil model for the fluids}
\label{sub:black-oil}
The black-oil model relies on the assumption that there are at most three \emph{fluids} in
the reservoir: oil, gas and water. Moreover, the fluids can be present in the reservoir in
up to two phases: a liquid phase, and a gaseous phase. A black-oil representation of a
reservoir can be seen in Figure~\ref{fig:reservoir_representation}. The three fluids, oil,
gas and water, can be present in the liquid phase and the gas in the liquid phase is
denoted as \emph{dissolved gas}. By contrast, it is assumed that in the gaseous phase,
only gas, denoted as \emph{free gas}, can be present.

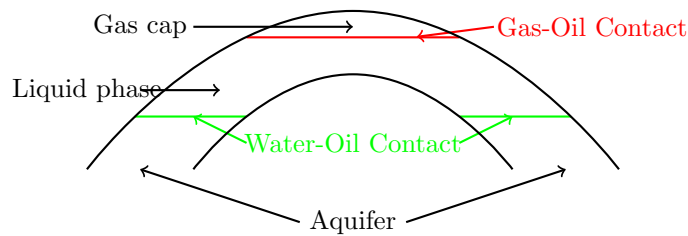
\begin{figure}[ht!]
  \begin{center}
    \begin{tikzpicture}[scale=0.7]
        \draw[red][thick] (-2,2.5)--(2,2.5);
        \draw[green][thick] (-2,1)--(-4.1,1);
        \draw[->][thick] (-3,2.7)--(0,2.7);
        \node at (0,0.5) {\verte{Water-Oil Contact}};
        \draw[green,->][thick] (2, 0.5)--(3,1);
        \draw[green,->][thick] (-2,0.5)--(-3,1);
        \draw[red,->][thick] (2.65, 2.7)--(1.2,2.51);
        \node at (4.5,2.7) {\rouge{Gas-Oil Contact}};
        \node at (-5, 1.5) {Liquid phase};
        \draw[->][thick] (-4,1.5)--(-2.5,1.5);
        \draw[green][thick] (2,1)--(4.1,1);
        \node at (-4, 2.7) {Gas cap};
        \draw[thick] plot [smooth,tension=1] coordinates{(-5,0) (0,3) (5, 0)};
        \draw[thick] plot [smooth,tension=1] coordinates{(-3,0) (0,1.8) (3, 0)};
        \node at (0,-1) {Aquifer};
        \draw[->][thick] (1,-1)--(4,0);
        \draw[->][thick] (-1,-1)--(-4,0);
    \end{tikzpicture}
    \caption{Black-oil Representation of a reservoir}
    \label{fig:reservoir_representation}
  \end{center}
\end{figure}

Therefore, in the black-oil model, we consider the following four components
\begin{itemize}
\item $\totalreservoiroil$, the standard volume of oil in the liquid phase,
\item $\totalreservoirgas$, the standard volume of free gas in the gaseous phase,
\item $\dissolvedgas$, the standard volume of dissolved gas in the liquid phase,
\item $\totalreservoirwater$, the standard volume of water in the liquid phase,
\end{itemize}
where \emph{standard volume} is the volume taken by a fluid at standard pressure and
temperature condition (1.01325 Bar and $15^{\circ}$C), also known as stock tank
conditions. The units of standard volumes are preceded by a capital S, as in Sm$^3$ for
standard cubic meter.

There are functions in the black-oil model to convert standard volumes into in situ
volumes in the reservoir under a given pressure and temperature. The set of functions
describing the pressure, volume and temperature behavior of the fluids, under the
black-oil assumption, is call the PVT (Pressure-Volume-Temperature) model. We consider
here a simplified black-oil model, assuming that the temperature in the reservoir is
stationary and uniform, which is a common assumption for a geological formation such as a
reservoir. There are four PVT functions, one per component, which are given in
Table~\ref{tab:PVT_Definition}. The PVT functions only depend on the reservoir pressure
under the stationary and uniform temperature assumption. As an example, given the oil
standard volume, $\totalreservoiroil$, and the reservoir pressure, $\respressure$, the oil
volume in the reservoir is given by
$\totalreservoiroil \times \oilformationfactor(\respressure)$.

\begin{table*}[ht!]
    \begin{tabular}{cp{12cm}}
        \toprule
        Notations               & Description \\
        \midrule
        $\oilformationfactor$   &
        Oil formation volume factor. It is the volume in barrels occupied in
        the reservoir, at the prevailing pressure and temperature, by one stock
        tank barrel of oil plus its dissolved gas. (unit: $rb/stb$)  \\
        $\gasformationfactor$   &
        Gas formation volume factor. It is the volume in barrels that one
        standard cubic foot of gas will occupy as free gas in the reservoir
        at the prevailing reservoir pressure and temperature. (unit: $rb/scf$) \\
        $\waterformationfactor$ &
        Water formation factor. It is the volume occupied in the reservoir by one
        stock tank barrel of water. (unit: $rb/stb$) \\
        $\solutiongasoilratio$  &
        Solution (or dissolved) gas. It is the number of standard cubic
        feet of gas which will dissolve in one stock tank barrel of oil when both
        are taken down to the reservoir at the prevailing reservoir pressure and
        temperature. (unit: $scf/stb$) \\
        \bottomrule
    \end{tabular}
    \caption{Definition of the PVT functions\label{tab:PVT_Definition}}
\end{table*}

One key characteristic of the black-oil model that we use is due to~\cite[chap
2]{danesh}, which states that the sum of the physical volumes in the reservoir associated
with the three components $\totalreservoiroil$, $\totalreservoirgas$,
$\totalreservoirwater$ is a decreasing function
\begin{equation}
  \respressure\mapsto
  \totalreservoiroil \times
  \oilformationfactor(\respressure) +
  \totalreservoirgas \times
  \gasformationfactor(\respressure) +
  \totalreservoirwater \times
  \waterformationfactor(\respressure)
  \eqfinv
  \label{eq:volume_equality}
\end{equation}
of the reservoir pressure.

The last characteristic of the black-oil model concerns the dissolved gas in the oil
$\dissolvedgas$. It is assumed in~\cite{Dake} that the standard volume of the dissolved
gas $\dissolvedgas$ is a function of both the standard volume of oil,
$\totalreservoiroil$, and the reservoir pressure, $\respressure$, as follows
\begin{equation}
  \dissolvedgas = \delta(\totalreservoiroil, \respressure)
  = \totalreservoiroil \times \solutiongasoilratio(\respressure)
  \eqfinp
  \label{eq:dissolved_gas_bis}
\end{equation}

\subsection{Conservation law in the reservoir}
\label{sub:conservation-law}
We assume that the reservoir structural integrity is guaranteed, so there is no leakage of
any fluids at any time. We can therefore write mass conservation equations, which are also
named \emph{material balance} equations in the oil literature, for each of the four
components introduced in \S\ref{sub:black-oil}. In order to write the material balance
equations of the reservoir, we need to consider the production volumes, $\oilproduced$,
$\gasproduced$ and $\waterproduced$ which are the standard volumes of oil, free gas and
water extracted from the reservoir.

Using material balance for the standard volume of oil in the liquid phase, we get
\begin{equation}
  \totalreservoiroil_{\timeindex+1} = \totalreservoiroil_{\timeindex} -
  \oilproduced_{\timeindex}
  ~~ \forall \timeindex \in \timeset\setminus\na{\horizon}
  \eqfinv
  \label{eq:MBalOil_bis}
\end{equation}
and, for the standard volume of water, we get
\begin{equation}
  \totalreservoirwater_{\timeindex+1} = \totalreservoirwater_{\timeindex} -
  \waterproduced_{\timeindex}
  ~~ \forall \timeindex \in \timeset\setminus\na{\horizon}
  \eqfinp
  \label{eq:MBalWater_bis}
\end{equation}

The material balance for gas requires some more developments as it mixes the standard
volume of free gas and the standard volume of dissolved gas. As given in
\S\ref{sub:black-oil}, at any time, $\timeindex \in \timeset$, the standard volume of
dissolved gas in the liquid phase $\dissolvedgas_\timeindex$ is given by
Equation~\eqref{eq:dissolved_gas_bis}. Therefore, between time $\timeindex$ and time
$\timeindex+1$, the standard volume of dissolved gas evolves from
$\dissolvedgas_{\timeindex} = \delta(\totalreservoiroil_{\timeindex},
\respressure_{\timeindex})$ to
$\dissolvedgas_{\timeindex+1} = \delta(\totalreservoiroil_{\timeindex+1},
\respressure_{\timeindex+1})$. Hence, the quantity
$(\dissolvedgas_{\timeindex} - \dissolvedgas_{\timeindex+1})$ of \emph{liberated} gas must
be added to the free gas material balance equation. Thus, for all
$\timeindex \in \timeset\setminus\na{\horizon}$, we obtain the following mass conservation
equation for the standard volume of free gas
\begin{align}
  \totalreservoirgas_{\timeindex+1}
  &= \totalreservoirgas_{\timeindex} - \gasproduced_{\timeindex}
    + (\dissolvedgas_{\timeindex} -
    \dissolvedgas_{\timeindex+1})
  \nonumber \\
  &=\totalreservoirgas_{\timeindex} - \gasproduced_{\timeindex}
    + \bp{\totalreservoiroil_{\timeindex} \times
    \solutiongasoilratio(\respressure_{\timeindex})
    - \totalreservoiroil_{\timeindex+1} \times
    \solutiongasoilratio(\respressure_{\timeindex+1})}
    \tag{by~\eqref{eq:dissolved_gas_bis}}
  \\
  &=\totalreservoirgas_{\timeindex} - \gasproduced_{\timeindex}
    + \Bp{\totalreservoiroil_{\timeindex}
    \times \solutiongasoilratio(\respressure_{\timeindex})
    - \bp{ \totalreservoiroil_{\timeindex} - \oilproduced_{\timeindex}}
    \times \solutiongasoilratio(\respressure_{\timeindex+1})}
  \tag{by~\eqref{eq:MBalOil_bis}}
  \\
  & = \totalreservoirgas_{\timeindex} - \gasproduced_{\timeindex}
    + \Big(\totalreservoiroil_{\timeindex} \times
    \bp{\solutiongasoilratio(\respressure_{\timeindex}) -
    \solutiongasoilratio(\respressure_{\timeindex+1})}
  \nonumber \\
  & \hspace{3cm}
    + \oilproduced_{\timeindex} \cdot
    \solutiongasoilratio(\respressure_{\timeindex+1})
    \Big)
    \eqfinp
    \label{eq:MBalGas_bis}
\end{align}

The last conservation equation is given by a physical volume constraint coming from the
fact that all four components of the reservoir are kept in the pores of the reservoir
rocks. We note $\porevolume$ the total pore volume of the reservoir. Following~\cite{Dake}
and assuming that the pore compressibility~$\porecompressibility$ is constant, the total
pore volume is a function of the pressure in the reservoir given by
\begin{equation}
  \label{eq:PoreVolume_exact}
  \porevolume_\timeindex = \reservoirvolume_0
  \mathrm{exp}\np{\porecompressibility \respressure_\timeindex}
  \eqsepv
  \forall \timeindex \in \timeset
  \eqfinv
\end{equation}
with $\reservoirvolume_0$ the asymptotic reservoir volume when pressure tends to $0$.

A linearized version of Equation~\eqref{eq:PoreVolume_exact} proposed in~\cite{Dake} is
\begin{equation}
  \frac{\porevolume_{\timeindex+1}-
    \porevolume_{\timeindex}}{\porevolume_{\timeindex}}
  = \porecompressibility \left(\respressure_{\timeindex+1} -
    \respressure_{\timeindex} \right)
  \eqsepv \forall \timeindex \in \timeset\setminus\na{\horizon}
  \eqfinv
  \label{eq:PoreCompression_bis}
\end{equation}
and is used to derive the state dynamics of the reservoir.

Now, we consider the \emph{saturations} of the fluids which are the proportions of the
available pore volume taken by each of the three fluids in the reservoir. Denoting by
$\oilsaturation$, $\gassaturation$ and $\watersaturation$ the saturations of respectively
the oil, free gas and water components, we obtain that the sum of the three saturations
must be equal to one over time
\begin{equation}
  \oilsaturation_{\timeindex} + \gassaturation_{\timeindex} +
  \watersaturation_{\timeindex} = 1 \eqsepv \forall \timeindex \in \timeset
  \eqfinp
  \label{eq:saturation_conservation_bis}
\end{equation}
Since, for all $\timeindex \in \timeset$ and
$i \in \na{\textsc{o}, \textsc{g}, \textsc{w}}$, we have that
\[
  S_\timeindex^{i} =
  \frac{V_t^i \times B_i(\respressure_{\timeindex})}{\porevolume_{\timeindex}}
  \eqfinv
\]
Equation~\eqref{eq:saturation_conservation_bis} gives
\begin{equation}
  \totalreservoiroil_{\timeindex} \times
  \oilformationfactor(\respressure_{\timeindex}) +
  \totalreservoirgas_{\timeindex} \times
  \gasformationfactor(\respressure_{\timeindex}) +
  \totalreservoirwater_{\timeindex} \times
  \waterformationfactor(\respressure_{\timeindex}) =
  \porevolume_{\timeindex}
  \eqfinv
  \forall \timeindex \in \timeset
  \eqfinp
  \label{eq:volume_equality_bis}
\end{equation}

\subsection{Construction of a production function}
\label{subsect:construction_of_GeneralProductionFunction}
The time evolution of the reservoir is driven by the three production volumes,
$\oilproduced$, $\gasproduced$ and $\waterproduced$ which are the standard volumes of oil,
free gas and water extracted from the reservoir.

Thus, the three production volumes may appear as possible controls on the reservoir.
However, when adding a production network to the reservoir model, the controls to be
considered are no longer production volumes, but decisions made upon
the production network, such as opening or closing a pipe, choosing the well-head or
bottom hole pressure, etc.

In the general case, we can assume that the physical model of the production network leads
to a production function~$\GeneralProductionFunction: \XX \times \UU \to \RR^3$, which
relates the production volumes to the variables of the reservoir
$\states = \np{\totalreservoiroil, \totalreservoirgas, \totalreservoirwater, \porevolume,
  \respressure}$ (we will show that $\states$ is a possible state of the reservoir) and to
the network controls~$\controls$, giving
\begin{equation}
  \left( \oilproduced, \gasproduced, \waterproduced  \right) =
  \GeneralProductionFunction(\states, \controls)
  \eqfinp
  \label{eq:general_production_definition}
\end{equation}

When considering only one well, a common assumption is that the production volumes are
given by the Inflow Performance Relationship $\InflowPerformanceRelationship$, which is a
function of the reservoir pressure $\respressure$, the bottom-hole pressure $\presvar$,
the saturation of water $\watersaturation$ and the saturation of gas $\gassaturation$.
More precisely, we obtain, for a one well model, that
\begin{align*}
  F_{\timeindex}^i =
    \GeneralProductionFunction^{i} \np{\states, \controls}
  &=
    \frac{
    \InflowPerformanceRelationship^i\np{\respressure_{\timeindex} -
    \presvar_{\timeindex}, \watersaturation_{\timeindex}, \gassaturation_{\timeindex} }}%
    { B_{i} \np{\respressure_{\timeindex}} }
    \eqsepv
    \forall i \in \na{\textsc{o}, \textsc{g}, \textsc{w}}
    \eqfinp
\end{align*}
In the general case, we then need to take into account pressure drop due to the flow in
the well itself through the use of a Vertical Lift Performance relationship. 

In the two cases presented in Section~\ref{sect:numerical_applications}, we can further
detail the general production function $\GeneralProductionFunction$ as follows
\begin{itemize}
\item For the gas reservoir as exposed in~\S\ref{subsect:2tanks_numerical_case}, we assume
  that the well only produces gas, and we hence obtain the following simplified
  formulation
  \begin{equation}
    \label{eq:well_gas_application_IPR}
    \gasproduced_{\timeindex} = \GeneralProductionFunction\gassuper \np{\states, \controls}
    =
    \frac{\InflowPerformanceRelationship\gassuper(
      \respressure_{\timeindex} - \presvar_{\timeindex})}{
      \gasformationfactor \np{\respressure_{\timeindex}}}
    \eqfinp
  \end{equation}
  Indeed, when we only produce gas, there is no need to consider the different
  saturations. Those saturations are necessary to find the proportion of oil, water and
  gas produced when applying a difference of pressure $\respressure - \presvar$. Having
  only gas implies that the saturations have no impact on the production.

\item When considering that the reservoir does not contain any free gas
  (i.e. $\totalreservoirgas = 0$ and $\gassaturation = 0$), we obtain the following
  simplification for the production of oil and water. We assume that the \emph{total
    production} $\totalflowvar_{\timeindex}$ follows a simplified Darcy's law
  \begin{equation}
    \totalflowvar_{\timeindex} =  \alpha \np{ \respressure_{\timeindex}
      - \presvar_{\timeindex} }
    \eqsepv \forall  \timeindex \in \timeset
    \label{eq:Darcy_s_law_appendix} \eqfinv
  \end{equation}
  where $\totalflowvar_{\timeindex}$ is given by
  \begin{equation}
    \totalflowvar_{\timeindex} =
    \oilproduced_{\timeindex} \times
    \oilformationfactor(\respressure_{\timeindex})
    + \underbrace{\gasproduced_{\timeindex} \times
      \gasformationfactor(\respressure_{\timeindex})}_{=0} +
    \waterproduced_{\timeindex} \times \waterformationfactor(\respressure_{\timeindex})
    \eqfinv
  \end{equation}
  with $\alpha$ the productivity index of the well, $\presvar_{\timeindex}$ the
  bottom-hole pressure of the well and $\totalflowvar_{\timeindex}$ the total production
  which consists of a mix of oil and water as we have assumed that we have no free gas.

  For the oil reservoir with water injection case presented in
  \S\ref{subsect:oil_wi_numerical_case}, the last assumption we make is that the amount of
  produced water is given by
  \begin{equation}
    \waterproduced_{\timeindex} \times \waterformationfactor(\respressure_{\timeindex}) =
    \alpha \np{\respressure_{\timeindex} - \presvar_{\timeindex}}
    \watercutfunct(\watersaturation_{\timeindex})
    \eqfinv
    \label{eq:FW-f-Vw}
  \end{equation}
  where $\watercutfunct$ is the water-cut function and, as already seen, where the water
  saturation is
  \[
    \watersaturation_{\timeindex} = \frac{\totalreservoirwater_{\timeindex}
    \waterformationfactor(\respressure_{\timeindex})}{\porevolume_{\timeindex}}
    \eqfinp
  \]
\end{itemize}

As we do not use more complex networks, we will not look any deeper into the network
controls and their relationship with the general production $\GeneralProductionFunction$
since those are beyond the scope of this paper.

\subsection{Reservoir dynamics}

We can now write the reservoir time evolution as a controlled dynamical system. The state
of the controlled dynamical system is
$\states = \left( \totalreservoiroil, \totalreservoirgas, \totalreservoirwater,
  \porevolume, \respressure \right)$. We also express the production volumes thanks to the
general production function, $\GeneralProductionFunction$, defined in
Equation~\eqref{eq:general_production_definition}.


Now, we show that using Equations~\eqref{eq:MBalOil_bis}, \eqref{eq:MBalWater_bis},
\eqref{eq:MBalGas_bis}, \eqref{eq:PoreCompression_bis}, \eqref{eq:volume_equality_bis}
and~\eqref{eq:general_production_definition} we can build a mapping $f$ such that
$x_{t+1}=f(x_t,u_t)$ for all $\timeindex \in \timeset$. We proceed as follows: we consider
the conservation Equation~\eqref{eq:volume_equality_bis} at time $\timeindex+1$, and use
Equations~\eqref{eq:MBalOil_bis},~\eqref{eq:MBalWater_bis},~\eqref{eq:MBalGas_bis}
and~\eqref{eq:PoreCompression_bis} to obtain the equation
\begin{multline}
  \left(\totalreservoiroil_{\timeindex}-\oilproduced_{\timeindex} \right) \times
  \oilformationfactor(\respressure_{\timeindex+1}) +
  \left(\totalreservoirwater_{\timeindex} - \waterproduced_{\timeindex} \right)
  \times \waterformationfactor(\respressure_{\timeindex+1}) \\
  + \Big[ \totalreservoirgas_{\timeindex} - \gasproduced_{\timeindex} +
  \totalreservoiroil_{\timeindex} \times \left(
    \solutiongasoilratio(\respressure_{\timeindex}) -
    \solutiongasoilratio(\respressure_{\timeindex+1}) \right)
  \\
  + \oilproduced_{\timeindex} \times
  \solutiongasoilratio(\respressure_{\timeindex+1}) \Big]
  \times \gasformationfactor(\respressure_{\timeindex+1})
  \\
  = \porevolume_{\timeindex} \left(1 + \porecompressibility (\respressure_{\timeindex+1}
    - \respressure_{\timeindex}) \right)
  \eqfinv
  \label{eq:pore_equals_fluids_bis}
\end{multline}
which depends on the state and production volumes at time $\timeindex$ and of the pressure
of the reservoir at time $t+1$. As recalled in~\S\ref{sub:black-oil}, it is established
in~\cite[chap 2]{danesh} that the left-hand side of
Equation~\eqref{eq:pore_equals_fluids_bis} is a decreasing function of the reservoir
pressure $\respressure_{\timeindex+1}$. More precisely, the expansion of the oil when gas
dissolves into it due to an increase in pressure $\Delta \presvar$ is less than the
aggregated volume decrease of the free gas and the other fluids due to that same
$\Delta \presvar$. To the contrary, the right-hand side of
Equation~\eqref{eq:pore_equals_fluids_bis} is increasing with the reservoir pressure.
Hence, Equation~\eqref{eq:pore_equals_fluids_bis} gives a
function~$\Xi: \XX \times \UU \rightarrow \RR$ such that
$\forall \timeindex \in \timeset, \respressure_{\timeindex+1} = \Xi(\states_{\timeindex},
\controls_{\timeindex})$.

Moreover, note that when the PVT functions ($\oilformationfactor$, $\gasformationfactor$,
$\waterformationfactor$ and $\solutiongasoilratio$) are piecewise linear functions, the
function~$\Xi$ can be computed efficiently. We only need to look at the breaking points of
the piecewise linear functions to know on which segment we can invert
Equation~\eqref{eq:pore_equals_fluids_bis}, thus getting the reservoir pressure
$\respressure$.

Combining
Equations~\eqref{eq:MBalOil_bis},~\eqref{eq:MBalWater_bis},~\eqref{eq:MBalGas_bis},
~\eqref{eq:PoreCompression_bis} and using function~$\Xi$, we finally obtain the expression
of function~$\dynamics$ given in Equation~\eqref{eq:dynamics_expression}.

\section{Material on state reduction}
\label{sect:state_reduction}

In this section, we detail how the general dynamics can be simplified in specific cases.

\subsection{Gas reservoir state reduction}
\label{subsect:state_reduction_gas}
We consider a gas reservoir with no gas injection and where there is no water production
or extraction, as used in~\S\ref{subsect:2tanks_numerical_case}, and we prove that the
time evolution of the gas reservoir can be described by a reduced state composed of the
standard volume of gas \(\states_{\timeindex} = \totalreservoirgas_{\timeindex} \).

By assumption, the reservoir contains only gas and a constant volume of water. Thus, the
standard volume of water satisfies
$\totalreservoirwater_{\timeindex}=\totalreservoirwater_{0}$ for all
$\timeindex \in \timeset$ and the standard volume of oil satisfies
$\totalreservoiroil_{\timeindex} = 0$ for all $\timeindex \in \timeset$. Hence, the state
dimension can be reduced from dimension 5 to dimension 3.

Now, we show that the state dimension can be reduced to 1. First, we use
Equation~\eqref{eq:PoreVolume_exact} in place of the linearized
version~\eqref{eq:PoreCompression_bis} to obtain that
$\porevolume_\timeindex = \reservoirvolume_0 \mathrm{exp}\np{\porecompressibility
  \respressure_\timeindex}$ for all $\timeindex \in \timeset$. Second, we consider
Equation~\eqref{eq:volume_equality_bis} at time $\timeindex$ together with
$\totalreservoiroil_{\timeindex} = 0$ and
$\totalreservoirwater_{\timeindex}=\totalreservoirwater_{0}$ and
$\totalreservoirgas_{\timeindex} = \totalreservoirgas_{\timeindex} -
\gasproduced_{\timeindex}$ to obtain
\begin{equation}
  \totalreservoirgas_{\timeindex} \times
  \gasformationfactor(\respressure_{\timeindex}) +
  \totalreservoirwater_{0} \times
  \waterformationfactor(\respressure_{\timeindex}) =
  \reservoirvolume_0 \mathrm{exp}\np{\porecompressibility \respressure_{\timeindex}}
  \eqsepv
  \forall \timeindex \in \timeset
  \eqfinp
  \label{eq:pore_volume_gas_simplified_more}
\end{equation}
The left-hand side of Equation~\eqref{eq:pore_volume_gas_simplified_more} is a decreasing
continuous
function of the pressure (the volume of gas and the production being known) which we
assume to be piecewise linear (we assume that the PVT functions are piecewise linear),
whereas the right-hand side is an increasing and continuous function of the pressure. This
implies that there can be at most one reservoir pressure which satisfies
Equation~\eqref{eq:pore_volume_gas_simplified_more}. Moreover, since the left-hand side is
piecewise linear, we can compute the reservoir pressure thanks to the $\lambertfunct$
Lambert function (the inverse relation of $f(w) = w e^w$), and since pressure is positive,
we use the $\lambertfunct_{0}$ branch of the Lambert function. Finally, we obtain a
function~$\Psi:\RR \to \RR$ such that the pressure
\begin{equation}
  \label{eq:respressure_psi}
  \respressure_{\timeindex} = \Psi(\totalreservoirgas_{\timeindex}) \eqsepv
  \forall \timeindex \in \timeset
  \eqfinv
\end{equation}
is the solution of Equation~\eqref{eq:pore_volume_gas_simplified_more}.

As the pressure, $\respressure_{\timeindex}$, is given as a function of
$\totalreservoirgas_{\timeindex}$ and the pore volume, $\porevolume_\timeindex$, is given
as a function of the pressure, $\respressure_{\timeindex}$, we obtain a reduced state of
dimension 1 given by the standard volume of gas $\totalreservoirgas_{\timeindex}$.

The only thing missing in order to get
formulation~\eqref{eq:formulation_gas_reservoir_one_tank} is to explicit the production
function. The production of gas is given by Equation~\eqref{eq:well_gas_application_IPR}.
As the reservoir pressure is given by the function $\Psi$, the production of gas when
considering a one tank reservoir is given by
\[
  \gasproduced = \frac{\InflowPerformanceRelationship\gassuper\np{\Psi(\totalreservoirgas}
    - \presvar)}{\gasformationfactor \np{\Psi(\totalreservoirgas)}}
  \eqfinp
\]
In the numerics, it is assumed that $\InflowPerformanceRelationship\gassuper$, the inflow
performance relationship of the well, is a piecewise linear function.

We consider two different models in \S\ref{subsect:2tanks_numerical_case}: a one tank
reservoir and a two tanks reservoir, as illustrated by
Figure~\ref{fig:two_coupled_reservoir_one_well}. In both cases, we have only one well and,
as the optimization is done at the bottom of the well, the unique control is given
by $\controls_{\timeindex} = \presvar_{\timeindex}$. The state in the one tank case is
$\states_{\timeindex} = \totalreservoirgas_{\timeindex}$, whereas it is
$\states_{\timeindex} = \bp{\np{\totalreservoirgas_{\timeindex}}^{(1)},
  \np{\totalreservoirgas_{\timeindex}}^{(2)}}$ for the two tanks case.

We denote by $\Psi_{1\mathbf{T}}$ the function which returns the reservoir pressure of the
one tank case given a volume of gas in the reservoir (as defined in
Equation~\eqref{eq:respressure_psi}), and $\Psi_{2\mathbf{T}}$ the function for the
producing tank pressure in the two tanks case.

The general production function $\GeneralProductionFunction_{1\mathbf{T}}$ of the one tank
case is hence given by
\begin{equation}
  \GeneralProductionFunction_{1\mathbf{T}}\gassuper(\states_{\timeindex},\controls_{\timeindex}) =
  \frac{\InflowPerformanceRelationship\gassuper \left(
      \Psi_{1\mathbf{T}}(\states_{\timeindex}) - \controls_{\timeindex} \right) }
  {\gasformationfactor(\Psi_{1\mathbf{T}}(\states_{\timeindex}))} = \gasproduced_{\timeindex}
  \eqfinp
  \label{eq:define_GPF_1T}
\end{equation}
For the two tanks case, we consider that the well only produces gas from the first tank.
The general production function $\GeneralProductionFunction_{2\mathbf{T}}$ of the two
tanks case is thus given by
\begin{equation}
  \GeneralProductionFunction_{2\mathbf{T}}\gassuper(\states_{\timeindex}, \controls_{\timeindex}) =
  \frac{\InflowPerformanceRelationship\gassuper \left(
      \Psi_ {2\mathbf{T}}^{(1)}(\states_{\timeindex}) - \controls_{\timeindex} \right) }
  {\gasformationfactor(\Psi_{2\mathbf{T}}^{(1)}(\states_{\timeindex}))}
  = \gasproduced_{\timeindex}
  \eqfinp
  \label{eq:define_GPF_2T}
\end{equation}
In the Formulation~\eqref{eq:formulation_gas_reservoir_one_tank} (for the one tank case),
we split $\GeneralProductionFunction_{1\mathbf{T}}\gassuper$ in
Constraints~\eqref{eq:Psi-mapping} and~\eqref{eq:IPR_production} to explicit the reservoir
pressure and to mirror Equation~\eqref{eq:well_gas_application_IPR}.

Moreover, since we have only one well and since the $\InflowPerformanceRelationship$
function is strictly monotonous, the production function of the well of
Equation~\eqref{eq:IPR_production} is injective. In the models considered here (one tank
or two tanks), we can thus pass from the controls to the production and from the
production to the controls without any ambiguity at a given state: the function
$\GeneralProductionFunction\gassuper(\states, \cdot)$ is a bijection, hence we find the
(unique) bottom-hole pressure associated with a given production $\gasproduced$ when in
state $\states$. Finally, we obtain the admissibility set of the gas reservoir case. As
the gas production $\gasproduced_{\timeindex}$ must be nonnegative, we obtain that the
control must satisfy
$ \presvar_{\timeindex} \in \left[ 0, \respressure_{\timeindex} \right]$ for all time
$\timeindex \in \timeset$, which gives the admissible control set
\begin{equation}
  \admcontrolset(\states_{\timeindex}) = \left[ 0, \respressure_{\timeindex}
  \right] = \left[ 0, \Psi_{1\mathbf{T}}(\states_{\timeindex}) \right] \eqfinp
  \label{eq:define_adm_control_set_1T}
\end{equation}

\subsection{Oil reservoir with water injection state reduction}

Now, we consider an oil reservoir where water injection is used to keep the reservoir
pressure constant as in \S\ref{subsect:oil_wi_numerical_case}. To eliminate the
possibility of having free gas in the reservoir, we assume that the initial pressure in
the reservoir is above the bubble-point. Indeed, as we are going to keep the pressure
constant, the pressure will always remain above the bubble-point.

We assume that the \emph{produced water}\footnote{Here, the produced water
  $\waterproduced$ is the water that is produced from the well. It should not be confused
  with the net produced water, which is the difference $\waterproduced - \waterinjected$
  between the water produced and the water injected} is given by
Equation~\eqref{eq:FW-f-Vw}.

We now prove that the standard volume of water $\totalreservoirwater_{\timeindex}$ may be
used as a state for describing the reservoir dynamics. To start with, we have that
$\totalreservoirgas_{\timeindex} = 0$, $\gasproduced_{\timeindex}= 0$ and
$\respressure_{\timeindex}= \respressure_{0}$ for all $\timeindex \in \timeset$. Moreover,
using Equation~\eqref{eq:PoreVolume_exact} in place of the linearized
version~\eqref{eq:PoreCompression_bis} we obtain that the pore volume is constant over
time and given by
\( \porevolume_{\timeindex} = \reservoirvolume_0 \mathrm{exp}\np{\porecompressibility
  \respressure_{0}} \). Hence, the state dimension can be reduced from dimension~5 to
dimension~2 as \(\totalreservoirgas_{\timeindex}\), \(\respressure_{\timeindex}\) and
\(\porevolume_\timeindex\) are known over time.

Now, using Equation~\eqref{eq:volume_equality_bis} combined with the fact that
$\totalreservoirgas_{\timeindex} = 0$, we obtain that
\begin{equation}
  \totalreservoiroil_{\timeindex} \times
  \oilformationfactor(\respressure_{0}) +
  \totalreservoirwater_{\timeindex} \times
  \waterformationfactor(\respressure_{0}) =
  \porevolume_{0}
  \eqfinv
  \forall \timeindex \in \timeset
  \eqfinp
  \label{eq:volume_equality_for water_injection}
\end{equation}
Thus, the standard volume of oil in
the reservoir is obtained as a function of the standard volume of water as follows
\[
  \totalreservoiroil_{\timeindex} = \frac{\porevolume_{0} - \totalreservoirwater_{\timeindex}
    \times \waterformationfactor(\respressure_0)}{\oilformationfactor(\respressure_0)}
  \eqfinp
\]
Moreover, using Equation~\eqref{eq:Darcy_s_law_appendix} and Equation~\eqref{eq:FW-f-Vw},
for all time $\timeindex \in \timeset$, we have that
\begin{subequations}
  \begin{align}
  \waterproduced_{\timeindex}
  &= \GeneralProductionFunction\watersuper \np{\totalreservoirwater_{\timeindex},
    \presvar_{\timeindex}}
  \\
  \text{with}&\; \GeneralProductionFunction\watersuper \np{\totalreservoirwater, \presvar}
    =
    \frac{\alpha \np{\respressure_{0} - \presvar}
               \watercutfunct\Bp{\frac{\totalreservoirwater}{\porevolume_{0}}
               \waterformationfactor(\respressure_{0})}}
    {\waterformationfactor(\respressure_{0})}
    \eqfinv
    \label{eq:Phiw}
  \end{align} 
\end{subequations} 
  and
\begin{subequations}
  \begin{align}
  \oilproduced_{\timeindex}
  &= \GeneralProductionFunction\oilsuper \np{\totalreservoirwater_{\timeindex},
    \presvar_{\timeindex}}
  \\
  \text{with}&\;
    \GeneralProductionFunction\oilsuper \np{\totalreservoirwater, \presvar}
    =
    \frac{\alpha \np{\respressure_{0} - \presvar}
    \bgp{1- \watercutfunct\Bp{\frac{\totalreservoirwater}{\porevolume_{0}}
    \waterformationfactor(\respressure_{0})}}}
    {\oilformationfactor(\respressure_{0})}
    \label{eq:Phio}
  \eqfinp
  \end{align}
\label{eq:def_Fo_WI}
\end{subequations}

Now, we turn to the time evolution of the standard volume of water.
Equation~\eqref{eq:MBalWater_bis} must be changed as we need to introduce the injected
water $\waterinjected_{\timeindex}$ at time $\timeindex$ to obtain
\begin{equation}
  \totalreservoirwater_{\timeindex+1} = \totalreservoirwater_{\timeindex} -
  \waterproduced_{\timeindex} + \waterinjected_{\timeindex}
  \eqsepv \forall \timeindex \in \timeset
  \eqfinp
  \label{eq:MBalWater_bis_injection}
\end{equation}
It remains to show that the water injection can be deduced from the previous equations.
Using Equation~\eqref{eq:volume_equality_bis} at time ${\timeindex+1}$ combined with
Equation~\eqref{eq:MBalWater_bis_injection} and Equation~\eqref{eq:MBalOil_bis} gives
\begin{equation}
  \np{\totalreservoirwater_{\timeindex} - {\waterproduced_{\timeindex}} +
    \waterinjected_{\timeindex}} \times \waterformationfactor \left(
    \respressure_{0} \right) + \\
  \bp{\totalreservoiroil_{\timeindex} -\oilproduced_{\timeindex}} \times
  \oilformationfactor \np{\respressure_{0}} = \porevolume_{0}
  \eqfinv
\end{equation}
which, using Equation~\eqref{eq:volume_equality_for water_injection}, \eqref{eq:Phiw}
and~\eqref{eq:Phio}, gives
\begin{equation}
  \waterinjected_{\timeindex}
  = \waterproduced_{\timeindex}
  + \oilproduced_{\timeindex} \times \frac{ \oilformationfactor \np{\respressure_{0}}}{
    \waterformationfactor \left( \respressure_{0} \right)}
  = \frac{\alpha \np{\respressure_{0} - \presvar_{\timeindex}}}
    {\waterformationfactor(\respressure_{0})}
    \eqfinp
    \label{eq:def_Fwi_function}
\end{equation}
We conclude that we obtain a state dynamics with a one dimensional state
$x_t =\totalreservoirwater_{\timeindex}$, a one dimensional control
$u_t=\presvar_{\timeindex}$, and state dynamics given by
\begin{equation}
  \totalreservoirwater_{\timeindex+1} = \totalreservoirwater_{\timeindex} -
  \frac{\alpha \np{\respressure_{0} - \presvar_{\timeindex}}
    \Bp{\watercutfunct\bp{\totalreservoirwater_{\timeindex}
        \waterformationfactor(\respressure_{0})/{\porevolume_{0}}}
    -1}}
    {\waterformationfactor(\respressure_{0})}
    \eqfinp
\end{equation}



\section{Details on the impact of the states and controls discretizations}
\label{sect:discretization_impact}

\paragraph{One tank gas reservoir.}
In the application of \S\ref{subsubsect:one_tank_gas}, we tried different discretization
values for the state and control spaces. Results get better each time we increase the
number of states or controls used in the loops of Algorithm~\ref{alg:dynamic_programming}.
The optimal values and CPU times are compiled in
Table~\ref{tab:discretization_table_one_tank}. Discretization of the control space has
less impact than discretization of the state space (there is no significant improvement
when using more than 10 possible controls). We used 50 possible controls for the rest of
the state discretization analysis to ensure we do not have any issues due to the control
space. Moreover, the computation time grows linearly with the number of controls, hence we
only got penalized by a factor of 5 for the computation time compared to being at the most
efficient level for the discretization of the controls. We can also remark that going
beyond \numprint{10000} points for the state discretization yields no discernible
improvement (less than $0.2\%$). However, the computation time grows exponentially with
the state discretization. We hence used \numprint{10000} points for the states and 20
controls for the results presented in \S\ref{subsubsect:one_tank_gas}.

\begin{table}[htbp]
  \begin{center}
    \begin{tabular}{rrr}
      \toprule
      State discretization & Value (M\euro) & CPU time (s)           \\
      \midrule
      \numprint{100}       &       602      &     \numprint{1.25}    \\
      \numprint{200}       &       689      &     \numprint{1.45}    \\
      \numprint{500}       &       725      &     \numprint{2.50}    \\
      \numprint{1000}      &       736      &     \numprint{7.50}    \\
      \numprint{2000}      &       740      &     \numprint{25.20}   \\
      \numprint{5000}      &       742      &     \numprint{110.00}  \\
      \numprint{10000}     &       743      &     \numprint{653.00}  \\
      \numprint{20000}     &       743      &     \numprint{2288.00} \\
      \numprint{50000}     &       743      &     \numprint{8142.00} \\
      \bottomrule
    \end{tabular}
    \caption{Summary of the impact of the discretization of the state space on the one
      tank formulation, with 50 possible controls\label{tab:discretization_table_one_tank}}
  \end{center}
\end{table}

\paragraph{Two tanks gas reservoir.}
We tried different discretization values for the two reservoirs problem of
\S\ref{subsubsect:two_tanks_gas}: $200 \times 200$ (i.e. the two reservoirs are discretized
with 200 points each), $400 \times 400$, $600 \times 600$ and
$\numprint{1000} \times \numprint{1000}$. Results are summarized in
Table~\ref{tab:discretization_2tanks_DP}, which shows the computation time of the
optimization and the optimal value obtained. As can be seen, the computation time grows
exponentially with the discretization, as we need to handle more and more values when we
get a finer discretization. However, performance remains reasonable for the number of time
steps considered. We can also remark that going past a $200 \times 200$ discretization of
the states of the reservoir does not improve the optimal value. A very small impact is
observed from the discretization of the controls. Indeed, almost no improvement is
obtained above 10 possible controls (we hence used $50$ possible controls in
Table~\ref{tab:discretization_2tanks_DP} to ensure the discretization of the controls will
not influence the analysis of the discretization of the states). All the results of
\S\ref{subsubsect:two_tanks_gas} were therefore computed with the $400 \times 400$
discretization for the states, and $20$ for the controls.

\begin{table}[htbp!]
  \begin{center}
    \begin{tabular}[bct]{crr}
      \toprule
      State discretization    & CPU time (s) & Value (M\euro) \\
      \midrule
      $50 \times 50$          &   $\numprint{5.1}$      &  $730$     \\
      $100 \times 100$        &   $\numprint{28.3}$     &  $735$     \\
      $200 \times 200$        &   $\numprint{115.3}$    &  $736$     \\
      $400 \times 400$        &   $\numprint{706.0}$      &  $736$     \\
      $600 \times 600$        &   $\numprint{3893.0}$     &  $736$     \\
      $1000 \times 1000$      &   $\numprint{18089.0}$    &  $736$     \\
      \bottomrule
    \end{tabular}
    \caption{Impact of the discretization of the state space on the two tanks model, with
      50 possible controls\label{tab:discretization_2tanks_DP}}
    \end{center}
\end{table}

\paragraph{Oil reservoir with water injection.}
We tried different values for the discretization of the state space of the problem
described in \S\ref{subsect:oil_wi_numerical_case}. However, the discretization of the
controls had no impact, as the controls only took two different values: either no
production, or production at the maximal rate. We therefore chose 10 possible controls to
ensure we do not missed another behavior during the analysis on the impact of the
discretization of the states. Table~\ref{tab:results_oil_with_wi} compiles the time to
solve and the associated results of the optimization depending on the number of points
considered for the discretization of the states space. We note that there is not a lot of
gain from going from \numprint{10000} points to \numprint{100000} points in the
discretization, whereas computation time grows by more than $100$ times.

\begin{table}[htbp!]
  \begin{center}
    \begin{tabular}[bct]{cccc}
      \toprule
      Discretization & Time steps & CPU time (s) & Value (M\euro) \\
      \midrule
      \numprint{1000}        &   $240$    &   $0.35$       &  $3182$    \\
      \numprint{10000}        &   $240$    &   $12.05$      &  $3358$    \\
      \numprint{100000}       &   $240$    &   $1575$       &  $3376$    \\
      \bottomrule
    \end{tabular}
    \caption{Summary of the dynamic programming results for the oil reservoir with
      water injection\label{tab:results_oil_with_wi}}
  \end{center}
\end{table}

\section{Additional material on the decline curves formulation}
\label{sect:decline_curve_explanation}
Usually, formulations using decline curves, as can be seen in the works
of~\cite{Grossmann98}, are of the form:
\begin{subequations}
  \begin{align}
    \max_{
    \controls} ~ ~
    &
      \sum_{\timeindex=0}^{\horizon}
      \rho^{\timeindex} \costfunct_{\timeindex}(\controls_{\timeindex})
      \label{eq:obj_dc_bis}\\
    s.t. ~~
    &
      \oilproduced_{\timeindex} \leq \declinecurve \left( \sum_{s = 0}^{\timeindex-1}
      \oilproduced_{\cumulatedtimeindex} \right)
      \eqsepv \forall \timeindex \in \timeset \setminus \{0\}
      \eqfinv
      \label{eq:production_dc_bis}\\
    & \controls_{\timeindex} \in \admcontrolset_{\timeindex}\left(
      \sum_{s = 0}^{\timeindex-1} \oilproduced_{\cumulatedtimeindex}
      \right)
      \eqsepv \forall \timeindex \in \timeset
      \eqfinp
      \label{eq:controls_admissibility_dc_bis}
  \end{align}
  \label{eq:general_formulation_decline_curves_bis}
\end{subequations}
Using decline curves, or oil deliverability curves, means using
Equation~\eqref{eq:production_dc_bis} to predict the reservoir's behavior. It states that
the maximal rate at time $\timeindex$ only depends on the oil cumulated production until
time~$\timeindex$. In the general case, there is no reason to believe that there is an
equivalence between a material balance model for the reservoir and a decline curve
represented with a function~$\declinecurve$.

However, when the state of the material balance formulation can be reduced to a one
dimensional state (such as a reservoir which only contains gas), there can be an
equivalence between the decline curve and the material balance formulations, as was stated
in Proposition~\ref{prop:equivalance_dc_mbal_1state}.

\begin{proof}[Proof of Proposition~\ref{prop:equivalance_dc_mbal_1state}]
  Let us consider the component $\GeneralProductionFunction\gassuper: \XX \times \UU
  \rightarrow \RR$ of the production mapping $\GeneralProductionFunction:\XX \times \UU
  \rightarrow \RR^3$ such that
  \begin{equation}
    \gasproduced_{\timeindex} = \GeneralProductionFunction\gassuper \left(
      \states_{\timeindex}, \controls_{\timeindex} \right)
    \eqfinp
    \label{eq:production_function_def}
  \end{equation}
  Therefore, we have
  \begin{equation}
    \gasproduced_{\timeindex} \leq \max_{\controls}\GeneralProductionFunction\gassuper
    \left( \states_{\timeindex}, \controls \right)
    \eqfinp
    \label{eq:max_production_def}
  \end{equation}
  Moreover, having a one-dimensional state greatly simplifies the dynamics, as we only
  need to consider one fluid. The dynamics thus simplifies to
  \begin{equation}
    \states_{\timeindex+1} = \dynamics\left( \states_{\timeindex}, \controls_{\timeindex}
    \right) = \states_{\timeindex} - \gasproduced_{\timeindex}
    \eqfinp
    \label{eq:one_dimension_dynamics}
  \end{equation}
  By propagating the simplified dynamics~\eqref{eq:one_dimension_dynamics} and by
  re-injecting it in Equation~\eqref{eq:max_production_def}, we get:
  \begin{equation}
    \gasproduced_{\timeindex} \leq \underbrace{\max_{\controls}
      \GeneralProductionFunction\gassuper
      \left(\states_{0} - \sum_{\cumulatedtimeindex=0}^{\timeindex-1}
        \gasproduced_{\cumulatedtimeindex}, \controls \right)}_{\declinecurve(
      \sum_{\cumulatedtimeindex=0}^{\timeindex-1}\gasproduced_{\cumulatedtimeindex})}
    \eqfinp
    \label{eq:decline_curve_1D}
  \end{equation}
  Hence, Equation~\eqref{eq:decline_curve_1D} defines the function~$\declinecurve$. The
  equivalence exists when the state is reduced to one dimension (as similar reasoning can
  be applied to the other one-dimensional cases).
\end{proof}

However, when considering more complex cases, such as a reservoir with both oil and gas, or
when there is water encroachment (influx of water in the reservoir from the aquifer), we
cannot have a reduction to a one-dimensional state. Decline curves, or oil deliverability
curves, will not be equivalent to the material balance system, as they can only represent
a one dimensional dynamical system, where the state is the cumulated production.

Even if we have a state that cannot be reduced to one dimension, we can still propagate the
dynamics in Equation~\eqref{eq:production_function_def}:
\begin{align*}
  \gasproduced_{\timeindex}
  &= \GeneralProductionFunction\gassuper(\states_{\timeindex}, \controls_{\timeindex}) \\
  &
  = \GeneralProductionFunction\gassuper \left(
      \dynamics\left( \dynamics \left( \dots \dynamics \left(\states_{0}, \controls_{0}
              \right), \dots \right), \controls_{\timeindex-1} \right),
      \controls_{\timeindex} \right)
  \eqfinp
\end{align*}
However, there is no reason to believe that there exists a function $\declinecurve$
depending on the sum of productions in the general case, contrarily to the one-dimensional
case. This is why those functions are generated with a given production planning, i.e. a
series of controls applied to the reservoir. Given a series of admissible controls
$\controlseries = \np{ \tilde{\controls}_0, \dots , \tilde{\controls}_{\horizon}}$, one
can create an oil-deliverability curve, that takes as argument the total cumulated
production and returns the maximal possible production. It however depends on the
underlying production planning $\controlseries$. We can create such function
$\tilde{\declinecurve}_{\controlseries}$ through the
Algorithm~\ref{alg:oil_deliverability_curve}.
\begin{algorithm}[htp!]
  \SetAlgoLined
  control\_to\_apply = $\controlseries$\;
  current\_state = $\states_0$\;
  cumulated\_production = 0\;
  max\_production = $\max_{\controls} \GeneralProductionFunction\gassuper$(current\_state,
  $\controls$)\;
  list\_of\_points = \{(cumulated\_production, max\_production)\}\;
  \For{t from 1 to $\horizon$}
  {
    $\tilde{\controls}$ = control\_to\_apply[t]\;
    production = $\GeneralProductionFunction\gassuper$(current\_state,
    $\tilde{\controls}$)\;
    cumulated\_production = cumulated\_production + production\;
    current\_state = \dynamics(current\_state, $\tilde{\controls}$)\;
    max\_production = $\max_{\controls} \GeneralProductionFunction\gassuper$(
    current\_state, $\controls$)\;
    push(list\_of\_points, (cumulated\_production, max\_production))\;
  }
  \Return{list\_of\_points}
  \caption{Finding the points of the piecewise linear function
    $\tilde{\declinecurve}_{\controlseries}$}
  \label{alg:oil_deliverability_curve}
\end{algorithm}

Once we have a list of points of $\tilde{\declinecurve}_{\controlseries}$, we consider
a linear interpolation between those points as the decline curve we use in the
optimization problem~\eqref{eq:general_formulation_decline_curves}.

In~\citep{Grossmann98,Marmier}, the authors use decline curves, i.e.
oil-deliverability curves with natural depletion at the maximal rate. This means that
there is no injection, and the production planning consists of maximal production rates.
We can generate those decline curves with a tweaked version of the previous procedure (see
Algorithm~\ref{alg:decline_curve}).
\begin{algorithm}[htp!]
  \SetAlgoLined
  current\_state = $\states_0$\;
  cumulated\_production = 0\;
  max\_production = $\max_{\controls} \GeneralProductionFunction\gassuper$(current\_state,
  $\controls$)\;
  list\_of\_points = \{(cumulated\_production, max\_production)\}\;
  \For{t from 1 to $\horizon$}
  {
    $\tilde{\controls} = \argmax_{\controls} \GeneralProductionFunction\gassuper$(
    current\_state, $\controls$)\;
    production = $\GeneralProductionFunction\gassuper$(current\_state,
    $\tilde{\controls}$)\;
    cumulated\_production = cumulated\_production + production\;
    current\_state = \dynamics(current\_state, $\tilde{\controls}$)\;
    max\_production = $\max_{\controls} \GeneralProductionFunction\gassuper$(current\_state,
    $\controls$)\;
    push(list\_of\_points, (cumulated\_production, max\_production))\;
  }
  \Return{list\_of\_points}
  \caption{Finding the points of the piecewise linear function $\declinecurve$}
  \label{alg:decline_curve}
\end{algorithm}

\end{document}